\theoremstyle{plain}
\newtheorem{theorem}{Theorem}[section] 
\newtheorem{lemma}[theorem]{Lemma}
\newtheorem{proposition}[theorem]{Proposition}
\def\k{{{\mathbb{K}}}}
\newcommand{\fun}[3]{#1: #2 \rightarrow #3}
\newcommand{\onef}[1]{\Omega^{#1}}
\newcommand{\tchrs}[2]{{\Gamma}^{#1}_{\hphantom{#1}#2}}
\newcommand{\dirac}{{ \slashed{D} }}
\newcommand{\CS}{{\hbox{{$\mathcal S$}}}}
\newcommand{\CP}{\hbox{{$\mathcal P$}}}
\newcommand{\CJ}{\hbox{{$\mathcal J$}}}
\newcommand{\C}{\mathbb{C}}
\newcommand{\R}{\mathbb{R}}
\newcommand{\Z}{\mathbb{Z}}
\newcommand{\cg}{\hbox{{$\mathfrak g$}}}
\newcommand{\del}{\partial}
\newcommand{\extd}{\mathrm{d}}
\newcommand{\eps}{{\epsilon}}
\newcommand{\tens}{\mathop{{\otimes}}}
\newcommand{\la}{{\triangleright}}
\newcommand{\id}{\mathrm{id}}
\newcommand{\<}{\langle}
\renewcommand{\>}{\rangle}
\def\rcross{{\triangleright\!\!\!<}}
\begin{document}

\author{Evelyn Lira Torres and Shahn Majid}
\address{School of Mathematical Sciences\\ Queen Mary University of London \\ Mile End Rd, London E1 4NS }
\email{ e.y.liratorres@qmul.ac.uk, s.majid@qmul.ac.uk}

\title{Quantum Gravity and Riemannian Geometry on the Fuzzy Sphere}
	\begin{abstract} We study the quantum geometry of the fuzzy sphere defined as the angular momentum algebra $[x_i,x_j]=2\imath\lambda_p \epsilon_{ijk}x_k$ modulo setting $\sum_i x_i^2$ to a constant, using a recently introduced 3D rotationally invariant differential structure. Metrics are given by symmetric $3 \times 3$ matrices $g$ and we show that for each metric there is a unique quantum Levi-Civita connection with constant coefficients, with scalar curvature $ \frac{1}{2}({\rm Tr}(g^2)-\frac{1}{2}{\rm Tr}(g)^2)/\det(g)$. As an application, we construct  Euclidean quantum gravity on the fuzzy unit sphere. We also calculate the charge 1 monopole for the 3D differential structure.  
	\end{abstract}
\keywords{Quantum gravity, fuzzy sphere, quantum geometry, noncommutative geometry. Version 1}
\maketitle

\section{Introduction}

The angular momentum algebra $U(su_2)$ has been viewed since the 1970s as the quantisation of $\R^3$ viewed as $su_2^*$ with its Kirillov-Kostant bracket as part of a general theory for any Lie algebra. As such, setting the quadratic Casimir to a constant quantises the coadjoint orbits, again in a standard way. The angular momentum algebra was also proposed  as `position coordinates' for  Euclideanised 2+1 quantum gravity by 't Hooft\cite{Hoo}. We denote it $\C_\lambda[\R^3]$ or `fuzzy $\R^3$ with generators $x_i$ and relations $[x_i,x_j]=2\lambda \epsilon_{ijk}x_k$ as in \cite{BatMa,BegMa,Ma:spo,FreMa} to indicate that we consider it a deformation of flat spacetime. Its covariance under the quantum double $D(U(su_2))=U(su_2)\rcross \C[SU_2]$ as `Poincare group' was found in \cite{BatMa} along with a 4D quantum-Poincar\'e invariant calculus, and further studied in \cite{Ma:spo}\cite{FreMa} among other places. This is by now a  well-established picture of a deformed $\R^3$  in Euclideanised 2+1 quantum gravity with point sources and without cosmological constant, see e.g.  \cite{FreLiv} at the group algebra level. Moreover, it deforms naturally to the quantum enveloping algebra $U_q(su_2)$ with quantum Poincar\'e group $D(U_q(su_2))\cong \C_q[SO_{1,3}]$ in 2+1 quantum gravity with cosmological constant, see \cite{MaSch} for an overview and the relationship to the bicrossproduct model Majid-Ruegg quantum spacetime\cite{MaRue}.  

In physical terms, $\lambda=\imath\lambda_p$ where $\lambda_p$ is a real deformation parameter which, in the above context (but not necessarily), should be  of order the Planck scale. The general idea that spacetime geometry is `quantum' or noncommutative was speculated since the 1920s but in modern times was proposed in \cite{Ma:pla} coming out of ideas for quantum gravity of quantum Born reciprocity or observable-state/Hopf algebra duality.  See also subsequent works by many authors, including \cite{AmeMa}. Not surprisingly, however,  the quantum spacetime $\C_\lambda[\R^3]$ with its natural rotationally invariant quantum metric is flat and admits only the zero Levi-Civita connection in the standard coordinates \cite[Example 8.15]{BegMa}. The same is true for the corresponding bicrossproduct model spacetime with quantum Poincar\'e covariant calculus\cite[Prop.~9.20]{BegMa}, the two models being related by twisting\cite{MaOse}. For quantum gravity effects with background gravity present we should look at curved quantum spacetimes.  

Indeed, one might expect such quantum Riemannian geometry to be more interesting for the corresponding `fuzzy sphere' quotient, but it turns out that the differential structure proposed in \cite{BatMa} does not descend to the fuzzy sphere. To address this problem, \cite[Example 1.46]{BegMa} recently proposed a different 3D differential structure on the fuzzy sphere and in the present paper we explore its quantum Riemannian geometry with this calculus.  We find that it is indeed curved for general metrics, including its natural rotationally invariant `round metric'. We will denote the unit fuzzy sphere here by $\C_\lambda[S^2]$. The term `fuzzy sphere' is also used in the literature, e.g. \cite{Mad}, for matrix algebras $M_n(\C)$ viewed in our terms as further quotients of $\C_\lambda[S^2]$ for certain values of $\lambda$ (those values that descend to the irreducible $n$-dimensional representations of spin $n/2$). 

We use the constructive `quantum groups' approach to quantum Riemannian geometry as in the recent text \cite{BegMa}. This was established in recent years e.g. \cite{BegMa1,BegMa2, BegMa5, Ma:gra, Ma:squ, Ma:haw, MaPac2,ArgMa} using particularly (but not only)  the notion of a bimodule connection\cite{DVM,Mou}. The formalism is recalled briefly in Section~\ref{secpre} along with the new differential structure on $\C_\lambda[S^2]$ proposed in \cite{BegMa}. Section~\ref{secqlc} contains the first new results, namely uniqueness and construction of a quantum-Levi Civita connection for each metric. Metrics here can be chosen freely as symmetric $3\times 3$ matrices in the natural basis of the 3D calculus. In Section~\ref{seccurv} we look at the curvature as a function of the metric and use this in Section~\ref{secqg} to explore Euclidean quantum gravity on the fuzzy sphere. Conventionally, the  Euclidean case, although not usual quantum gravity itself, is nevertheless of interest on any compact Riemannian manifold with boundary\cite{HawGib}. 

Note also that our approach to quantum Riemannian geometry is very different from that of Connes\cite{Con} based on spectral triples as abstract `Dirac' operators, but the two approaches can sometimes be related \cite{BegMa3}. A first step for the fuzzy sphere would be to compute the Grassmann or monopole connection, which we do in the short Section~\ref{secmon}. 

It remains to explain why the cotangent bundle on the fuzzy sphere in this paper is 3D not 2D. Indeed, it often happens in quantum geometry that there is an obstruction to having the same dimension connected differential calculus as classically and preserving symmetries. This is because most highly noncommutative geometries are inner in the sense of a 1-form $\theta$ such that the exterior derivative is $\extd=[\theta,\ \}$ but this equation has no meaning in classical geometry (the right hand side would be zero) and indeed it is a purely quantum phenomenon. Hence quantum calculi often contain an extra dimension not visible classically, which we could think of as an internally generated `time' direction $\theta=\extd t$, since quite often the partial derivative in the $\theta$ direction turns out to be the natural wave operator or Laplacian. This was explained in \cite{Ma:spo} in the context of the 4D calculus on $\C_\lambda[\R^3]$, where this external time is not part of spacetime but may be more related to geodesic flow proper time according to recent ideas in \cite{BegMa5} or to renormalisation group flow according to ideas in \cite{FreMa}. For our new calculus on $\C_\lambda[S^2]$, we actually do have a (nonconnected) 3D calculus on $\C_\lambda[\R^3]$ with no extra dimension there, but when this descends to the sphere quotient, the  special inner element $\theta=(2\imath\lambda_p)^{-2}x_i\extd x_i$, i.e. which geometrically would be the normal to the sphere, does not decouple as it would classically and make the calculus on the fuzzy sphere 3D not 2D. 

\section{Preliminaries} \label{secpre}

Here we give a very short introduction to the general formalism, with more details to be found in \cite{BegMa} and references there in. This provides the framework whereby our constructions for the fuzzy sphere should not be seen as ad-hoc but natural within this context. To this end, let $A$ be a unital algebra, possibly noncommutative, over a field $\k$ (we will be mainly interested in $\C$). By a  first order \textit{differential calculus} $(\Omega^1,\extd)$ we mean that:

\begin{enumerate}
	\item[(1)] $\onef{1}$ is an $A$-bimodule;
	\item[(2)] A linear map $\fun{\extd }{A}{\onef{1}}$ such $\extd (ab)=(\extd a)b+a\extd b$  for all $a, b \in A$;
		\item[(3)] $\onef{1}={\rm span} \{ a \extd  b \, | \, a,b \in A \}$;
	\item[(4)] \textit{(Optional)} ${\rm ker} (\extd )=\k .1_A$
	\end{enumerate}

Here $\extd$ is called the \textit{exterior derivative}, the condition $(3)$ is the surjectivity condition and $(4)$ is the \textit{connectedeness property},  which
is not an axiom but is desirable. We also require $\Omega^1$ to extend to an exterior algebra $\Omega=\oplus_i\Omega^i$ of forms of different degree, generated by $A=\Omega^0$ and $\Omega^1$ with $\extd$ extending by the graded Leibniz rule and $\extd^2=0$. The product of $\Omega$ is denoted $\wedge$. For quantum Riemannian geometry we only need up to $\Omega^2$. See \cite[Chap. 1]{BegMa}. 

In this context,  we define a quantum metric as  $g\in \onef{1}\otimes_A \onef{1}$ such that there exists an inverse $(\ ,\ ):\Omega^1\tens\Omega^1\to A$ which is a bimodule map. Inverse here means in the usual sense but turns out to require that $g$ is central. We usually (but not always) also require $g$ to be quantum symmetric in the sense $\wedge(g)=0$. If this does not hold, we speak of an asymmetric or `generalised' metric.

Next, we left connection on $\Omega^1$ for us means a linear map $\nabla:\Omega^1\to \Omega^1\tens\Omega^1$ obeying the left Leibniz rule
\[ \nabla(a.\omega)=\extd a\tens\omega+a.\nabla\omega\]
for all $a\in A$ and $\omega\in \Omega^1$. If $X:\Omega^1\to A$ is a right vector field in the sense that it commutes with the action of $A$ from the right, then we may define $\nabla_X=\cdot(X\tens\id)\nabla:\Omega^1\to \Omega^1$ which then behaves like a usual covariant derivative. In the classical case with local coordinates $x^i$, say, we would set $\nabla\extd x^i=-\Gamma^i{}_{jk}\extd x^j\tens\extd x^k$ in terms of Christoffel symbols. 
One can apply a similar definition for any vector bundle in the sense of a left $A$-module $E$ (typically required to be projective). In our case $E=\Omega^1$ is a bimodule and we demand 
\[ \nabla (\eta a)=(\nabla \eta)a+\sigma (\eta \otimes\extd  a),\] 
for a bimodule map $\fun{\sigma}{\onef{1}\otimes _A \onef{1}}{\onef{1}\otimes _A \onef{1}}$, called the \textit{generalised braiding}. If this exists, it is uniquely determined by this formula, so this is not additonal data, just a property of some left connections. We say that $\nabla$ is then a {\em bimodule connection}\cite{DVM,Mou}. This case is nice because bimodule connections can be tensor producted. Relevant to us is that $\Omega^1\tens_A\Omega^1$ gets a bimodule connection
\[ \nabla(\omega\tens\eta)=\nabla\omega\tens\eta+ (\sigma(\omega\tens( ))\tens\id)\Delta\eta\]
for $\omega,\eta\in \Omega^1$. In this case $\nabla g=0$ makes sense and we say when that holds that $g$ is {\em metric compatible}\cite{BegMa2}. Explicitly, we need 
\[ \nabla g := (\nabla \otimes \text{id})g+(\sigma \otimes \text{id})(\text{id}\otimes\nabla )g=0. \]
See \cite[Chap 8]{BegMa} for more details. 

Also, for any left connection on $\Omega^1$ and a choice of $\Omega^2$, we have a canonical notion of torsion, which is the standard notion but written in terms of differential forms as
 \[T_{\nabla}=\wedge\nabla-\extd: \Omega^1\to \Omega^2\]
 Given a generalised metric we also have a notion of `cotorsion' define by \[coT_{\nabla}=dg^1 \otimes g^2 - g^1 \wedge \nabla g ^2 \in \onef{2}\otimes _{A}\onef{1}.\]
 which classically is a skew-symmetrized version of metric compatibility. We say that a connection is a {\em quantum Levi-Civita connection} (QLC) for a metric $g$  if it is torsion free and metric compatible. We say that it is a weak QLC (WQLC) if it is cotorsion free and torsion free. One can show that a QLC is necessarily a WQLC so it can be useful to impose the WQLC condition first, being linear in $\nabla$ compared to the QLC condition which is quadratic (due to the $\sigma$, which is linear in $\nabla$) and hence much harder to solve. 
 
Finally, over $\C$, we need everything to be `unitary' or `real' in a suitable sense. It means that $A$ is a $*$-algebra in the usual sense, and that $*$ extends to $\Omega$ in a way that commutes with $\extd$ and is a graded-order reversing involution (it means there is an extra minus sign on a product of odd degree forms). We require the metric to and connection to be `real' in the sense
\[  {\rm flip}(*\tens *)g=g,\quad  \sigma\circ {\rm flip}(*\tens *)\nabla= \nabla \circ *\]
In the classical case with self-adjoint local coordinates, this would ensure that the metric and connection coefficients are real. These are a well-studied set of axioms for which many interesting examples are known, eg \cite{BegMa2, Ma:squ, Ma:haw,MaPac2}.

\subsection{Fuzzy sphere and its 3D differential calculus}

We work over $\C$ and start with the enveloping algebra $U(su_2)$ of the angular momentum Lie algebra, with basis $x_i$ normalised so that 
\[ [x_i,x_j]=2\imath\lambda_p \epsilon_{ijk}x_k\]
for a parameter $\lambda_p$. We call this $\C_\lambda[\R^3]$ as a quantisation of functions on $\R^3$. We take it as a $*$ algebra with $x_i^*=x_i$ and $\lambda_p$ real. Note that this has finite-dimensional irreducible representations $\rho_j$ labelled by a non-negative  half-integer $j$ of dimension $n=2j+1$ and in which $\sum_i x_i^2= (n^2-1)\lambda_p^2$ in our normalisation. We define the {\em unit fuzzy sphere} $A=\C_\lambda[S^2]$ as the quotient $U(su_2)$ modulo the relation
\[ \sum_i x_i^2= 1-\lambda_p^2\]
which we see descends to the spin $j$ representation precisely when $\lambda_p= 1/n$. We keep $\lambda_p$ as a free parameter, however.  Note that in all cases $A=\C_\lambda[S^2]$ is infinite-dimensional and therefore never a matrix algebra. 

Next we define $\Omega(\C_\lambda[\R^3])$ as a free 3D calculus with central basis $s^i$, $i=1,2,3$. This means we impose $[s^i , x_j ]=0$, and
we define a differential 
\[  \extd x_i= \epsilon_{ijk}x_{j}s^k\]
which one can check is translation and rotation invariant calculus, but not connected. Indeed, $\extd \sum_i x_i^2=0$ so there are different connected components according to any constant value of $ \sum_i x_i^2$. The calculus is inner with 
\[ \theta= \tfrac{1}{2\imath \lambda_p}x_i s^i=\tfrac{1}{(2\imath\lambda_p)^2}x_i\extd x_i=-\tfrac{1}{(2\imath\lambda_p)^2}(\extd x_i) x_i.\]
Finally, for the exterior algebra we take $s^i$ to be Grassmann, with
\[  s^i \wedge s^j + s^j \wedge s^i =0,\quad \extd s^i =-\tfrac{1}{2}\epsilon_{ijk}s^j \wedge s^k\]
The reader should be warned, however, that this is no longer inner in higher degree by $\theta$. That in turn means it is not the maximal prolongation
of the first order calculus, but is a natural quotient.

We then take the same form of calculus and $\extd$ for $A=\C_\lambda[S^2]$, where we add the unit sphere relation. This is compatible with $\extd$ for reasons already given and this time we obtain a connected calculus. These facts are all covered in \cite[Example~1.46]{BegMa} and one could do the same for a sphere of any fixed radius. 

\begin{lemma}\label{sl} In $\Omega(\C_\lambda[S^2])$, one has
\[  s^l= \tfrac{1}{(1-\lambda_p^2)}\left(\tfrac{1}{2\imath\lambda_p} x_lx_i \extd x_i +\epsilon_{lim}(\extd x_i) x_m\right)\]
\end{lemma}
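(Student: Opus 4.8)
The plan is to \emph{invert} the defining relation $\extd x_i=\epsilon_{ijk}x_j s^k$ so as to express each basis $1$-form $s^l$ in terms of the $\extd x_i$. The crucial point is that on the sphere quotient this inversion becomes possible, whereas on $\C_\lambda[\R^3]$ it is not: there the calculus is non-connected and the radial direction is undetermined, so it is precisely the radial relation $\sum_i x_i^2=1-\lambda_p^2$ together with the inner element $\theta$ that supply the missing information.

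First I would form the combination $\epsilon_{lim}(\extd x_i)x_m$ appearing on the right-hand side and substitute $\extd x_i=\epsilon_{ijk}x_j s^k$. Using centrality $[s^k,x_m]=0$ to move $s^k$ to the far right, this becomes $\epsilon_{lim}\epsilon_{ijk}\,x_j x_m s^k$. The key algebraic step is the contraction identity (obtained after a cyclic rotation of the first Levi-Civita symbol)
\[ \epsilon_{lim}\epsilon_{ijk}=\delta_{lk}\delta_{mj}-\delta_{lj}\delta_{mk}, \]
which collapses the double sum into two terms, one proportional to $\big(\sum_m x_m^2\big)s^l$ and one of the form $-x_l\big(x_m s^m\big)$, so that $\epsilon_{lim}(\extd x_i)x_m=\big(\sum_m x_m^2\big)s^l-x_l\big(x_m s^m\big)$.

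Next I would insert the two global facts. The radial relation gives $\sum_m x_m^2=1-\lambda_p^2$, producing the coefficient $(1-\lambda_p^2)s^l$. The inner structure gives $x_m s^m=2\imath\lambda_p\,\theta=\tfrac{1}{2\imath\lambda_p}x_i\extd x_i$, so the second term becomes $-\tfrac{1}{2\imath\lambda_p}x_l x_i\extd x_i$. Rearranging the resulting identity $(1-\lambda_p^2)s^l=\epsilon_{lim}(\extd x_i)x_m+\tfrac{1}{2\imath\lambda_p}x_l x_i\extd x_i$ and dividing by $1-\lambda_p^2$ yields exactly the stated formula.

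I do not expect a serious obstacle, since the computation reduces to a single $\epsilon$--$\delta$ contraction combined with the two defining relations. The only point demanding care is operator ordering: as the $x_j$ do not commute one must keep the products $x_j x_m$ in the order produced by the substitution. Fortunately no commutator is actually needed, because the two surviving terms involve either a repeated index ($x_m x_m$) or a clean left factor of $x_l$, so the ordering matches the stated formula verbatim. One should also observe that dividing by $1-\lambda_p^2$ is legitimate precisely because we are on the unit sphere with $\lambda_p\neq\pm 1$.
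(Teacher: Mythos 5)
Your proposal is correct and follows essentially the same route as the paper's own proof: both invert $\extd x_i=\epsilon_{ijk}x_js^k$ via the $\epsilon$--$\delta$ contraction (the paper packages it as $\epsilon_{lmi}\extd x_i=x_ls^m-x_ms^l$ before multiplying by $x_m$), then use centrality of the $s^i$, the two expressions for $\theta$, and the sphere relation to rearrange. No gaps.
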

\proof First observe from the form of $\extd x_i$ that $\epsilon_{lmi}\extd x_i   =\epsilon_{lmi}\epsilon_{ijk}x_j s^k =x_l s^m - x_m s^l$. Then 		\begin{align*} \epsilon_{lmi}\extd x_i x_m  &=(x_l s^m - x_m s^l)x_m  =x_l x_m s^m - x_m x_m s^l =
		 2i\lambda _p x_l \theta - x_m x_m s^l \\
		 &=2i\lambda _p x_l (\tfrac{1}{(2i\lambda_p)^2}x_i \extd x_i) - x_m x_m s^l
		 =\tfrac{1}{2i\lambda_p} x_l x_i \extd x_i - x_m x_m s^l \end{align*} using the two forms of $\theta$. We then use the sphere relation and rearrange as stated. \endproof

\medskip
\section{Moduli of QLCs on the fuzzy sphere}\label{secqlc}

Once we have fixed the calculus it is clear, since $\Omega^1(\C_\lambda[S^2])$ has a central basis, a general metric, as already observed in \cite[Example~1.46]{BegMa}, takes the form
\[ g=g_{ij}s^i\tens s^j\]
where, since $g$ has to be central, we need the coefficients $g_{ij}$ to be central. Since the centre of $U(su_2)$ is generated by the quadratic Casimir, it follows that $\C_\lambda[S^2]$ has trivial centre, so $g_{ij}\in \C$. For quantum symmetry we clearly need $g_{ij}=g_{ji}$ and for the reality property we note that $s^i{}^*=s^i$ so that we need $g_{ij}$ to be hermitian which, given the symmetry, means $g_{ij}\in \R$. Finally, we need $g_{ij}$ to be an invertible matrix with inverse $g^{ij}$, say. Then $(s^i,s^j)=g^{ij}$. The new question, which we address, is what are the QLCs and WQLCs. Part of this was answered in \cite{Msc}, although not our main result Proposition~\ref{QLC}.

\subsection{Quantum Levi-Civita connection}

First, we consider an arbitrary connection in the equivalent forms
\[ \nabla s^i= -\tfrac{1}{2}\tchrs{i}{jk}s^j \otimes s^k , \]
where $\tchrs{i}{jk}\in A$.  As $s^i$ form a basis, any such form gives a left connection. 

\begin{proposition}\label{torcotor} Let $\Gamma_{ijk}:=g_{im}\Gamma^m{}_{jk}$. 

\begin{enumerate}
\item $\nabla$ is torsion free if and only if  $\Gamma_{ijk}-\Gamma_{ikj}= 2g_{im}\eps_{mjk}$.

\item $\nabla$ is cotorsion free if and only if $\tchrs{}{ijk} -\tchrs{}{jik}=2g_{km}\epsilon_{mij}$.
\end{enumerate}
\end{proposition}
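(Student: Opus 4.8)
The plan is to compute both the torsion and the cotorsion directly from their definitions, substituting the given forms of $\extd s^i$ and $\nabla s^i$ and exploiting that the $s^i$ are central, so that any coefficient in $A$ may be moved freely past them. Since $\onef{2}$ is free over $A$ on the Grassmann basis $\{s^a\wedge s^b\}_{a<b}$, each identity reduces to reading off the vanishing of the coefficient of an independent $2$-form, and because $s^a\wedge s^b$ is antisymmetric only the antisymmetric part of each coefficient in the wedged index pair can contribute. I note first that neither $T_\nabla$ nor $coT_\nabla$ produces any derivative of the $\tchrs{i}{jk}$, so even though these lie in $A$ rather than $\C$ the resulting conditions are purely algebraic.

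For part (1), I would evaluate $T_\nabla(s^i)=\wedge\nabla s^i-\extd s^i$. Using $\wedge\nabla s^i=-\tfrac12\tchrs{i}{jk}\,s^j\wedge s^k$ together with $\extd s^i=-\tfrac12\eps_{ijk}\,s^j\wedge s^k$, this becomes $-\tfrac12(\tchrs{i}{jk}-\eps_{ijk})\,s^j\wedge s^k$. Antisymmetry of $s^j\wedge s^k$ forces the antisymmetric part in $j,k$ to vanish; since $\eps_{ijk}-\eps_{ikj}=2\eps_{ijk}$ this gives $\tchrs{i}{jk}-\tchrs{i}{kj}=2\eps_{ijk}$, and lowering the upper index with $g_{im}$ yields $\Gamma_{ijk}-\Gamma_{ikj}=2g_{im}\eps_{mjk}$, as claimed.

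For part (2), I would expand $coT_\nabla=g_{ij}(\extd s^i\tens s^j-s^i\wedge\nabla s^j)$, the constants $g_{ij}$ passing through everything. Substituting $\extd s^i$ and $\nabla s^j=-\tfrac12\tchrs{j}{lm}\,s^l\tens s^m$, and moving the central $\tchrs{j}{lm}$ through $s^i$, I would rewrite both terms over the common basis $(s^a\wedge s^b)\tens s^c$. The key identification is $g_{ij}\tchrs{j}{lm}=\Gamma_{ilm}$, after which the coefficient of $(s^i\wedge s^l)\tens s^m$ is $\tfrac12(\Gamma_{ilm}-g_{mn}\eps_{nil})$. Here the wedge is antisymmetric in the first two slots $i,l$, so cotorsion-freeness is the vanishing of the antisymmetric part in that pair; using $\eps_{nij}-\eps_{nji}=2\eps_{nij}$ this is exactly $\Gamma_{ijk}-\Gamma_{jik}=2g_{km}\eps_{mij}$ after relabelling.

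I do not expect a genuine obstacle here: the argument is essentially a bookkeeping exercise. The one point requiring care is to track correctly which pair of indices gets antisymmetrised in each case: for torsion it is the two lower connection indices, while for cotorsion the wedge pairs the metric index with the first connection index, so after lowering it is the first two indices of $\Gamma_{ijk}$ that are antisymmetrised. Keeping the overall factors of $\tfrac12$ straight against the factor $2$ produced by the $\eps$ differences, and remembering that $g$ is symmetric when reshuffling $g_{im}\eps$ into the stated form, are the only places where an error could slip in.
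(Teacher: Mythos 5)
Your proposal is correct and follows essentially the same route as the paper: expand $T_\nabla$ and $coT_\nabla$ directly from their definitions using the central basis $s^i$, and read off the vanishing of the antisymmetric part of the relevant coefficient (in $j,k$ for torsion, and in the metric index paired with the first connection index for cotorsion). Your index bookkeeping, including the factor of $2$ from $\eps_{ijk}-\eps_{ikj}$ and the identification $g_{ij}\tchrs{j}{lm}=\tchrs{}{ilm}$, matches the paper's computation.
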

\proof (1) is immediate from 
	\begin{align*}
		T_\nabla s^i= \wedge \nabla s^i - \extd s^i& = \wedge (-\tfrac{1}{2}\tchrs{i}{kl}s^k \otimes s^l) + \tfrac{1}{2}\epsilon_{ikl}s^k \wedge s^l 
	\end{align*}
	that torsion freeness needs $
        \tchrs{i}{kl} - \tchrs{i}{lk} +2\epsilon_{ilk}=0$, which we write to match (2) in terms of the lowered index version.
        
(2) We calculate
		\begin{align*}
		coT_{\nabla} & =\extd (g_{ij}s^i)\otimes s^j -g_{ij}s^i \wedge (-\tfrac{1}{2}\tchrs{j}{kl}s^k\otimes s^l ) \\
		& = g_{ij}(-\tfrac{1}{2}\epsilon_{imn}s^m \wedge s^n ) \otimes s^j + \tfrac{1}{2} g_{ij}s^i \wedge (\tchrs{j}{kl}s^k \otimes s^l ) \\
		& = -\tfrac{1}{2}( g_{ml}\epsilon_{mik} - g_{ij}\tchrs{j}{kl})(s^i \wedge s^k )\otimes s^l.
		\end{align*}	
 So vanishing of cotorsion requires that  $g_{ml}\epsilon_{mik} - g_{ij}\tchrs{j}{kl}$ is symmetric in $i,k$, which we have written as stated. 
\endproof
Therefore, the moduli of WQLCs (where torsion and cotorsion vanish) are given by the two conditions (1) and (2) simultaneously. Note that the two conditions have a very similar form, which is part of the symmetric role of torsion and cotorsion in the WQLC theory. Finally, for the full  QLC theory, we need to know when $\nabla$ is a bimodule connection, i.e. when there exists a suitable generalised braiding $\sigma$ and what it looks like. 

\begin{lemma} \label{sigma} If $\nabla$ is a bimodule connection on $\Omega^1(\C_\lambda[S^2])$ then 
\[ \sigma(s^i \otimes s^j )= s^j \otimes s^i+\tfrac{1}{2(1-\lambda _p^2)} ( \tfrac{1}{2i\lambda _p} x_j x_n [\tchrs{i}{lk},x_n] + \epsilon_{jmn} [\tchrs{i}{lk},x_m]x_n )s^l \otimes s^k  \]
\end{lemma}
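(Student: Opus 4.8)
The plan is to compute the generalised braiding $\sigma$ directly from its defining property, namely that it is the unique bimodule map making $\nabla$ into a bimodule connection via
\[ \nabla(\eta a)=(\nabla\eta)a+\sigma(\eta\tens\extd a).\]
Since $\{s^i\}$ is a central basis, the cleanest route is to exploit this centrality: for any $a\in A$ we have $\nabla(s^i a)=\nabla(a s^i)$, and I would expand both sides and read off $\sigma(s^i\tens\extd a)$. Concretely, the left Leibniz rule gives $\nabla(a s^i)=\extd a\tens s^i+a\nabla s^i$, while $\nabla(s^i a)=(\nabla s^i)a+\sigma(s^i\tens\extd a)$. Subtracting, and using that $s^i$ and $s^k\tens s^l$ are central so that $(\nabla s^i)a=a\nabla s^i=-\tfrac12 a\,\tchrs{i}{kl}s^k\tens s^l$, one obtains
\[ \sigma(s^i\tens\extd a)=\extd a\tens s^i-\tfrac12[\tchrs{i}{kl},a]\,s^k\tens s^l. \]

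First I would specialise this to $a=x_n$, so that $\extd x_n=\epsilon_{njk}x_j s^k$ is a known explicit 1-form. This turns the formula into
\[ \sigma(s^i\tens\epsilon_{npq}x_p s^q)=\epsilon_{npq}x_p\,s^q\tens s^i-\tfrac12[\tchrs{i}{kl},x_n]\,s^k\tens s^l, \]
valid for each $n$. The issue is that the left-hand argument is $\epsilon_{npq}x_p s^q$ rather than a bare basis element $s^j$, so I cannot yet read off $\sigma(s^i\tens s^j)$. The key step is therefore to \emph{invert} the relation between $\{\extd x_n\}$ and $\{s^j\}$: Lemma~\ref{sl} provides exactly this, expressing $s^l$ as an explicit $A$-linear combination of the $\extd x_i$. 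Using that $\sigma$ is a bimodule map (so it is $A$-linear in the right tensor slot and I may pull the coefficients $x_l x_i$ and $\epsilon_{lim}x_m$ across), I would substitute the Lemma~\ref{sl} expression for the second factor and collect terms. The stated factor $\tfrac{1}{2(1-\lambda_p^2)}$ and the combination $\tfrac{1}{2i\lambda_p}x_j x_n[\tchrs{i}{lk},x_n]+\epsilon_{jmn}[\tchrs{i}{lk},x_m]x_n$ then appear precisely as the image under Lemma~\ref{sl} of the correction term $-\tfrac12[\tchrs{i}{kl},x_n]s^k\tens s^l$, after relabelling the dummy indices to match $s^l\tens s^k$ as written.

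The main obstacle I anticipate is bookkeeping with the central basis together with the noncommutativity hidden in the commutators $[\tchrs{i}{lk},x_n]$. Because the $s^i$ are central but the $x_i$ are not, I must be careful that every coefficient I move past a tensor factor is genuinely a right module action and that the $\sigma$-image of a product $x_p s^q$ is handled as $\sigma(s^i\tens x_p s^q)=\sigma(s^i\tens s^q)x_p$ only after confirming I am applying $A$-linearity on the correct side; mixing up left versus right bimodule structure here is the easiest way to generate spurious commutator terms. A secondary subtlety is \emph{consistency}: since Lemma~\ref{sl} inverts a relation that holds only modulo the sphere relation $\sum_i x_i^2=1-\lambda_p^2$, I should check that the resulting formula for $\sigma(s^i\tens s^j)$ is independent of which representative of $s^j$ via $\extd x_i$ I use, i.e. that it is well defined on $\Omega^1(\C_\lambda[S^2])$. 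Granting that, the computation is otherwise a direct substitution, and the displayed formula follows by matching the $\epsilon$-contractions and the $\tfrac{1}{2i\lambda_p}$ factors from $\theta$ against the two summands produced by Lemma~\ref{sl}.
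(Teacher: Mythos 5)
Your strategy coincides with the paper's own proof: use the bimodule-connection identity $\nabla(s^i a)=(\nabla s^i)a+\sigma(s^i\tens \extd a)$ together with centrality of $s^i$ to read off $\sigma(s^i\tens\extd x_j)$, then invert via Lemma~\ref{sl}, pulling the coefficient $x_jx_n$ out on the left and the coefficient $x_m$ out on the right by the bimodule-map property of $\sigma$. However, the execution of the key step contains a genuine error. You assert that ``$(\nabla s^i)a=a\nabla s^i=-\tfrac12 a\,\tchrs{i}{kl}s^k\tens s^l$'' on the grounds that $s^k\tens s^l$ is central. Centrality of $s^k\tens s^l$ only gives $(\nabla s^i)a=-\tfrac12\tchrs{i}{kl}\,a\,s^k\tens s^l$, and this differs from $a\nabla s^i=-\tfrac12\,a\,\tchrs{i}{kl}\,s^k\tens s^l$ by exactly $-\tfrac12[\tchrs{i}{kl},a]\,s^k\tens s^l$ because the coefficients $\tchrs{i}{kl}\in A$ need not be central. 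That commutator is the entire content of the lemma: if your claim were true, your own subtraction would yield $\sigma(s^i\tens\extd a)=\extd a\tens s^i$ with no correction, i.e.\ $\sigma=$ flip for \emph{every} connection, contradicting the formula you then write down.

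Carrying the subtraction out correctly gives
\[
\sigma(s^i\tens\extd a)=\extd a\tens s^i-[\nabla s^i,a]=\extd a\tens s^i+\tfrac12[\tchrs{i}{kl},a]\,s^k\tens s^l,
\]
with a \emph{plus} sign rather than the $-\tfrac12$ you state; it is this plus sign that produces the $+\tfrac{1}{2(1-\lambda_p^2)}$ in the lemma, so as written your argument would land on the negative of the correction term. Once the sign is fixed, the rest of your outline is the paper's computation, but one substantive check remains which you only gesture at: that the image under the Lemma~\ref{sl} substitution of the leading part $\epsilon_{jmk}x_m s^k\tens s^i$ collapses to exactly $s^j\tens s^i$. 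This is not automatic; it uses $\epsilon_{jmk}x_jx_m=2\imath\lambda_p x_k$ from the commutation relations together with the sphere relation $\sum_m x_m^2=1-\lambda_p^2$, and should be recorded. (Your separate worry about well-definedness is not an issue: $\sigma$ is already a well-defined bimodule map by hypothesis, and Lemma~\ref{sl} is an identity in $\Omega^1(\C_\lambda[S^2])$, so you are merely evaluating a given map on a given element.)
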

\proof The generalised braiding if it exists, given that $[s^i , x_j ]=0$, is characterised by
	\[ \sigma (s^i \otimes \extd x_j )=\extd x_j \otimes s^i -[\nabla s^i , x_j ]= \epsilon_{jmn}x_m s^n \otimes s^i + \tfrac{1}{2} [\tchrs{i}{mn},x_j]s^m \otimes s^n \]
where the tensor is over the algebra $A$. 
Now substituting  $s^n$ from Lemma~\ref{sl},  
\begin{align*}
		\sigma(s^i \otimes s^n )&=\sigma( s^i \otimes \tfrac{1}{2i\lambda_p (1-\lambda_p^2 )} x_n x_j\extd x_j + \tfrac{1}{1-\lambda_p^2}\epsilon _{nmj}\extd x_m x_j) \\ 
		& =\tfrac{1}{2i\lambda_p (1-\lambda_p^2 )} x_n x_j \sigma(s^i \otimes \extd x_j )  + \tfrac{1}{1-\lambda_p^2}\epsilon _{nmj} \sigma(s^i \otimes \extd x_m ) x_j\\
		&= \tfrac{1}{2i\lambda_p (1-\lambda_p^2 )} x_n x_j (\epsilon_{jmk}x_m s^k \otimes s^i + \tfrac{1}{2}[\tchrs{i}{mk},x_j]s^m \otimes s^k) \\
		&\qquad+ \tfrac{1}{1-\lambda_p^2}\epsilon_{nmj} (\epsilon_{mlk}x_l s^k \otimes s^i + \tfrac{1}{2}[\tchrs{i}{lk},x_m]s^l \otimes s^k)x_j\\
		& =\tfrac{1}{1-\lambda _p^2}(\tfrac{1}{2i\lambda _p} \epsilon_{jmk}x_n x_j x_m + \epsilon_{nmj} \epsilon_{mlk} x_l x_j )s^k \otimes s^i +\\
		&\qquad  + \tfrac{1}{2(1-\lambda _p^2)} ( \tfrac{1}{2i\lambda _p} x_n x_j [\tchrs{i}{lk},x_j] + \epsilon_{nmj} [\tchrs{i}{lk},x_m]x_j )s^l \otimes s^k \\
		& = s^n\tens s^i+ \tfrac{1}{2(1-\lambda _p^2)} ( \tfrac{1}{2i\lambda _p} x_n x_j [\tchrs{i}{lk},x_j] + \epsilon_{nmj} [\tchrs{i}{lk},x_m]x_j )s^l \otimes s^k 
	\end{align*}
as stated, using the commutation relations in $A$.  \endproof

If follows that $\sigma$ is a flip if the $\Gamma$ are central, i.e. the case of constant coefficients $\Gamma_{ijk}\in\C$. At least in this case, $\sigma$ will be manifestly well defined as a bimodule map, otherwise this will depend on the commutators with $\Gamma$. Also in this flip case the `reality' property with respect to $*$ then reduces to $\Gamma_{ijk}\in\R$. We are now ready to consider the condition for full metric compatibility.

\begin{lemma}\cite{Msc} The metric compatibility condition assuming $\Gamma$ are constant coefficients is
	\[ \tchrs{}{lik} + \tchrs{}{kil} = 0\]	
	and in this case $\sigma(s^i\tens s^j)=s^j\tens s^i$ is well-defined. 
	\end{lemma}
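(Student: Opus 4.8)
The plan is to exploit the simplification already noted just before the statement: when the $\tchrs{i}{jk}$ are constant scalars, all the commutators $[\tchrs{i}{lk},x_n]$ appearing in Lemma~\ref{sigma} vanish, so the generalised braiding reduces to the flip $\sigma(s^i\tens s^j)=s^j\tens s^i$. Because the $s^i$ are central and form a free basis, this flip is manifestly a well-defined bimodule map, which disposes of the second assertion immediately. The remaining task is therefore purely to evaluate the metric-compatibility condition $\nabla g=0$ with $\sigma$ taken to be this flip.

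First I would substitute $g=g_{ab}s^a\tens s^b$ and $\nabla s^a=-\tfrac12\tchrs{a}{jk}s^j\tens s^k$ into the defining formula $\nabla g=(\nabla\tens\id)g+(\sigma\tens\id)(\id\tens\nabla)g$. The first term expands directly to $-\tfrac12 g_{ab}\tchrs{a}{jk}\,s^j\tens s^k\tens s^b$. For the second term I would first apply $\id\tens\nabla$ to obtain $-\tfrac12 g_{ab}\tchrs{b}{jk}\,s^a\tens s^j\tens s^k$, and then apply $\sigma\tens\id$, which, being the flip on the first two legs, interchanges $s^a$ and $s^j$ to give $-\tfrac12 g_{ab}\tchrs{b}{jk}\,s^j\tens s^a\tens s^k$.

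Next I would lower indices using $\tchrs{}{ijk}=g_{im}\tchrs{m}{jk}$ together with the symmetry $g_{ij}=g_{ji}$, and collect both contributions as coefficients of the free basis $s^p\tens s^q\tens s^r$ of $\onef{1}\tens_A\onef{1}\tens_A\onef{1}$. The first term contributes $-\tfrac12\tchrs{}{rpq}$ and the second $-\tfrac12\tchrs{}{qpr}$, so that $\nabla g=-\tfrac12(\tchrs{}{rpq}+\tchrs{}{qpr})\,s^p\tens s^q\tens s^r$. Since the basis is free, $\nabla g=0$ is equivalent to the vanishing of every coefficient, i.e. $\tchrs{}{rpq}+\tchrs{}{qpr}=0$; relabelling $(p,q,r)\mapsto(i,k,l)$ yields exactly $\tchrs{}{lik}+\tchrs{}{kil}=0$ as claimed.

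There is no deep obstacle here: once $\sigma$ is known to be the flip the computation is linear and reduces to careful index bookkeeping. The one place to be attentive is the action of $\sigma\tens\id$ on the second term, which must be the flip on the first two tensor legs \emph{only}, and the lowering of indices must be done consistently with the symmetric $g$ before coefficients are compared, since a slip in either of these is precisely what would spoil the symmetric pattern $\tchrs{}{lik}+\tchrs{}{kil}$.
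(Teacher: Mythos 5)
Your proposal is correct and follows essentially the same route as the paper: both substitute $g=g_{ab}s^a\tens s^b$ and $\nabla s^a=-\tfrac12\tchrs{a}{jk}s^j\tens s^k$ into $\nabla g=(\nabla\tens\id)g+(\sigma\tens\id)(\id\tens\nabla)g$ and compare coefficients on the free basis, the only cosmetic difference being that you invoke constancy of $\Gamma$ (so that $\sigma$ is the flip, by Lemma~\ref{sigma}) at the outset, whereas the paper carries the commutator terms through the computation and drops them at the end. Your index bookkeeping and the resulting condition $\tchrs{}{lik}+\tchrs{}{kil}=0$ match the paper's conclusion.
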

  \begin{proof}	
	\begin{align*}
		\nabla g &=(\nabla \otimes \id )(g_{ij}s^i \otimes s^j) + (\sigma \otimes \id )(g_{ij}s^i \otimes \nabla(s^j )) \\
		& = -g_{ij}\tfrac{1}{2}\tchrs{i}{mn}s^m \otimes s^n \otimes s^j - (\sigma \otimes \id )(g_{ij}s^i \otimes \tfrac{1}{2}\tchrs{j}{mn}s^m \otimes s^n )  \\
		& = -g_{ij}\tfrac{1}{2}\tchrs{i}{mn}s^m \otimes s^n \otimes s^j - \tfrac{1}{2}g_{ij}\tchrs{j}{mn}\sigma(s^i \otimes s^m )\otimes s^n \\
		& = -\tfrac{1}{2}(\tchrs{}{nmi}+\tchrs{}{imn})s^m \otimes s^i \otimes s^n - \tfrac{1}{4(1-\lambda_p^2)} \tchrs{}{imn}\left(\tfrac{x_mx_p}{2\imath\lambda_p} [\tchrs{i}{lk}, x_p ]+ \epsilon_{mpq}[\tchrs{i}{lk}, x_p]x_q\right) s^l \otimes s^k \otimes s^n. 	
	\end{align*}	
In the natural case of constant coefficients of $\Gamma$, we can drop the second term. \end{proof}

It is a nice check that torsion free and metric compatible (in our constant $\Gamma$ case) implies cotorsion free, as it must. Indeed, we can write torsion freeness as $\tchrs{}{lki}- \tchrs{}{lik}  = 2g_{lm}\epsilon_{mki} $ by Proposition~\ref{torcotor}. Given metric compatibility in the form just found, this is equivalent to $ - \tchrs{}{ikl} +\tchrs{}{kil} = 2g_{lm}\epsilon_{mki}$, which is the cotorsion free condition in Proposition~\ref{torcotor}. 
It remains to solve for the moduli of constant coefficient QLC solutions for a given  a metric $g_{ij}s^i \otimes s^j$. 

\begin{proposition}\label{QLC} For any metric $g_{ij}$, there is a unique  QLC among those with constant coefficients, namely 
\[ \Gamma_{ijk}=2\eps_{ikm}g_{mj}+{\rm Tr}(g)\eps_{ijk}.
 \]
There are real, hence the connection is $*$-preserving. 
\end{proposition}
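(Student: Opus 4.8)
The two conditions that a constant-coefficient QLC must satisfy are both linear in $\Gamma$: torsion-freeness is $\Gamma_{ijk}-\Gamma_{ikj}=2g_{im}\epsilon_{mjk}$ by Proposition~\ref{torcotor}(1), and metric compatibility for constant $\Gamma$ is $\Gamma_{ijk}+\Gamma_{kji}=0$ by the preceding lemma. So finding the QLC amounts to solving this linear system in the $27$ unknowns $\Gamma_{ijk}\in\C$. The plan is to verify that the stated formula solves both conditions and then to prove separately that the homogeneous system has only the zero solution; together these give existence and uniqueness. Reality is then immediate: the data on the right-hand side ($g_{ij}$, $\epsilon$, ${\rm Tr}(g)$) are all real, so $\Gamma_{ijk}\in\R$, and by the discussion after Lemma~\ref{sigma} the associated $\sigma$ is the flip and $\nabla$ is $*$-preserving.

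For uniqueness, suppose $\Gamma,\Gamma'$ both solve the system and set $D=\Gamma-\Gamma'$. Then $D$ satisfies the homogeneous conditions $D_{ijk}=D_{ikj}$ (symmetric in the last two indices) and $D_{ijk}=-D_{kji}$ (antisymmetric under swapping the outer two). Chaining these, $D_{ijk}=-D_{kji}=-D_{kij}=D_{jik}$, shows $D$ is also symmetric in its first two indices, hence totally symmetric; but total symmetry together with $D_{ijk}=-D_{kji}$ forces $D_{ijk}=-D_{ijk}$, so $D=0$. Thus at most one constant-coefficient QLC exists.

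For existence I would substitute $\Gamma_{ijk}=2\epsilon_{ikm}g_{mj}+{\rm Tr}(g)\epsilon_{ijk}$ into the two conditions. Metric compatibility is a one-line check, since $\epsilon_{ikm}+\epsilon_{kim}=0$ and $\epsilon_{ijk}+\epsilon_{kji}=0$ kill both terms of $\Gamma_{ijk}+\Gamma_{kji}$. Torsion-freeness is the only genuine computation; after using $\epsilon_{ijk}-\epsilon_{ikj}=2\epsilon_{ijk}$ and the symmetry of $g$, it reduces to the purely combinatorial three-dimensional identity
\[
 g_{im}\epsilon_{jkm}-g_{jm}\epsilon_{ikm}+g_{km}\epsilon_{ijm}={\rm Tr}(g)\epsilon_{ijk},
\]
which follows by contracting $g_{pl}$ into the vanishing total antisymmetrisation of $\delta_{pi}\epsilon_{jkl}$ over the four indices $i,j,k,l$ (no four indices can be antisymmetrised in three dimensions).

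The main obstacle is this torsion verification: organising the $\epsilon$--$\epsilon$ and $\epsilon$--$g$ contractions so that they collapse onto the Schouten-type identity above. Everything else---metric compatibility, reality, and the uniqueness argument---is essentially immediate once the system has been written in the lowered-index form of Proposition~\ref{torcotor}.
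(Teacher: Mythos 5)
Your proposal is correct, but it reaches the result by a genuinely different route from the paper. The paper first \emph{parametrizes} the solution set of the constant-coefficient metric compatibility condition $\Gamma_{lik}+\Gamma_{kil}=0$ by the ansatz $\Gamma_{ijk}=\epsilon_{ikm}\gamma_{mj}$ (a bijection between $3\times 3$ matrices $\gamma$ and tensors antisymmetric in the outer indices), and then rewrites torsion-freeness as the matrix equation $L_i\gamma-(L_i\gamma)^t+2g_{im}L_m=0$ with $(L_i)_{mn}=\epsilon_{imn}$, which is solved uniquely by $\gamma=2g-{\rm Tr}(g)\,\id$; existence and uniqueness thus come out of a single linear-algebra computation that actually \emph{derives} the formula. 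You instead take the formula as given and split the work: uniqueness by the classical symmetry-chasing argument (the homogeneous system forces total symmetry, which together with outer antisymmetry forces $D=0$ --- exactly the fundamental lemma of Riemannian geometry transplanted to this setting, and valid here because both conditions are linear with constant coefficients), and existence by direct substitution, where the torsion check collapses onto the standard three-dimensional identity $g_{im}\epsilon_{jkm}-g_{jm}\epsilon_{ikm}+g_{km}\epsilon_{ijm}={\rm Tr}(g)\epsilon_{ijk}$ (equivalently, the derivative of $\det$ at the identity, or over-antisymmetrization of four indices in three dimensions). I checked your index gymnastics: the chain $D_{ijk}=-D_{kji}=-D_{kij}=D_{jik}$ is right, and the torsion verification does reduce to precisely that Schouten-type identity. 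Your uniqueness argument is arguably more conceptual and generalizes immediately to any dimension, whereas the paper's method has the advantage of producing the answer rather than verifying it; the reality/$*$-preservation discussion is the same in both. No gaps.
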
 
\proof We have to  solve the joint system
\[ \tchrs{}{ikl} - \tchrs{}{ilk} =2g_{im}\epsilon_{mkl} , 
	\quad \tchrs{}{lik} + \tchrs{}{kil} = 0,\]
	the second of which is solved by setting $\Gamma_{ijk}=\eps_{ikm}\gamma_{mj}$ for some matrix $\gamma$. Letting $L_i$ be the matrices $(L_i)_{mn}=\eps_{imn}$, the first equation is then
	\[ L_i\gamma-(L_i\gamma)^t+2g_{im}L_m=0\]
	as matrices. This is a  linear system for $\gamma$ with a unique solution
	\[ \gamma=2g-{\rm Tr}(g)\id\]
	which translates into the solution stated. 
	% \left(
%\begin{array}{ccc}g_{11}-g_{22}-g_{33} & 2 g_{12} & 2 g_{13} \\
 %2 g_{12} & - g_{11}+ g_{22}-g_{33} & 2 g_{23} \\
 %2 g_{13} & 2 g_{23} & - g_{11}- g_{22}+g_{33} \\
%\end{array}\right)  \]
	 (Note that we do
	not have to solve the cotorsion equation $\tchrs{}{kil} -\tchrs{}{ikl} +2g_{lm}\epsilon_{mki}= 0$ as this is implied, as mentioned.) \endproof
For example, when $g_{ij}=\delta_{ij}$ (the rotationally invariant or `round') metric, we have a unique solution   $\Gamma_{ijk}=\eps_{ijk}$.

\subsection{Ricci curvature}\label{seccurv}  

\medskip
Now that we understand the moduli of QLCs, we explore their curvature on the fuzzy sphere for general metrics.  The curvature in quantum Riemannian geometry is defined by 
\[ R_\nabla:\Omega^1\to \Omega^2\tens_A\Omega^1,\quad R_\nabla  = (\extd \otimes \id-\id \wedge \nabla)\nabla\]  
which in our case we can necessarily write in the form
\[ R_\nabla(s^i)=\rho^i{}_{jk}\eps_{jmn}s^m\wedge s^n\tens s^k\]
for some curvature coefficients $\rho^i{}_{jk}\in A$.  We are also interested in taking a `trace' for the  Ricci tensor and the Ricci scalar, which in the current framework \cite{BegMa} means with respect to a further, but in our case canonical, `lift' map 
\[ i:\Omega^2\to \Omega^1\tens_A\Omega^1,\quad  i(s^i \wedge s^k) = \tfrac{1}{2}(s^i \otimes s^k - s^k \otimes s^i ). \]
For this choice of $i$ and form of $R_\nabla$, we have 
\begin{align*}{\rm Ricci}&=((\ ,\ )\tens\id)(\id\tens i\tens\id)(\id\tens R_\nabla)(g)\\
&=\tfrac{1}{2}g_{pi}\rho^i{}_{jk}\eps_{jmn}((s_p,\ )\tens\id)(s^m\tens s^n\tens s^k-s^n\tens s^m\tens s^k)\\
&=\tfrac{1}{2}\rho^i{}_{jk}\eps_{jin}s^n\tens s^k-\tfrac{1}{2}\rho^i{}_{jk}\eps_{jmi}s^m\tens s^k	= \rho^i{}_{jn}\eps_{jim}s^m\tens s^n. 
\end{align*}
Hence the Ricci tensor defined by ${\rm Ricci}=R_{mn}s^m\tens s^n$ and Ricci scalar $S=(\ ,\ ){\rm Ricci}$ are 
\[  R_{mn}=\rho^i{}_{jn}\eps_{jim},\quad S=\rho^i{}_{jn}\eps_{jim}g^{mn}.\]
Finally,  {\em we adopt the convention that indices of $\eps$ can be raised with the inverse metric $(\ ,\ )=g^{-1}$ with matrix entries  $g^{ij}$}.

\begin{proposition}\label{propS} For $\C_\lambda[S^2]$, the scalar curvature for the QLC in Proposition~\ref{QLC} is
\[ S= \tfrac{1}{2}({\rm Tr}(g^2)-\tfrac{1}{2}{\rm Tr}(g)^2)/\det(g). \]
\end{proposition}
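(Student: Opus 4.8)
The plan is to feed the explicit Christoffel symbols of Proposition~\ref{QLC} into the curvature operator $R_\nabla=(\extd\otimes\id-\id\wedge\nabla)\nabla$, read off the coefficients $\rho^i{}_{jk}$, and then contract down to $S$ using the formula $S=\rho^i{}_{jn}\eps_{jim}g^{mn}$ established above. Since the coefficients are constant, $R_\nabla$ is purely combinatorial: writing $\nabla s^i=-\tfrac12\tchrs{i}{jk}s^j\otimes s^k$ and using $\extd s^j=-\tfrac12\eps_{jab}s^a\wedge s^b$, the two terms of $R_\nabla$ give
\[ R_\nabla(s^i)=\tfrac14\tchrs{i}{jk}\eps_{jab}\,s^a\wedge s^b\otimes s^k-\tfrac14\tchrs{i}{pq}\tchrs{q}{ab}\,s^p\wedge s^a\otimes s^b. \]

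First I would convert both terms into the canonical shape $\rho^i{}_{jk}\eps_{jmn}s^m\wedge s^n\otimes s^k$. Using $s^p\wedge s^a=\tfrac12\eps_{pac}\eps_{cmn}s^m\wedge s^n$ together with $\eps_{jmn}\eps_{jmn}=6$ to identify the dual basis of $2$-forms, this yields
\[ \rho^i{}_{jk}=\tfrac14\tchrs{i}{jk}-\tfrac18\eps_{paj}\tchrs{i}{pq}\tchrs{q}{ak}. \]
Substituting $\tchrs{i}{jk}=g^{il}\Gamma_{ljk}$ with $\Gamma_{ijk}=2\eps_{ikm}g_{mj}+{\rm Tr}(g)\eps_{ijk}$ makes $\rho$ fully explicit in $g$ and $g^{-1}$, and the scalar splits as $S=S_A+S_B$ with $S_A=\tfrac14\tchrs{i}{jn}\eps_{jim}g^{mn}$ linear in $\Gamma$ and $S_B=-\tfrac18\eps_{paj}\tchrs{i}{pq}\tchrs{q}{an}\eps_{jim}g^{mn}$ quadratic in $\Gamma$.

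The remaining work is a contraction of $\eps$-tensors against metric factors. The two tools I would use repeatedly are the contraction identity $\eps_{ijk}\eps_{ilm}=\delta_{jl}\delta_{km}-\delta_{jm}\delta_{kl}$ and, crucially, the cofactor identity $\eps_{ipq}g_{pb}g_{qc}=\det(g)\,g^{ia}\eps_{abc}$ valid for invertible $g$; it is the latter that trades two explicit metric factors for a single inverse metric and thereby produces the overall $1/\det(g)$. After collecting terms, the surviving invariants of the symmetric $3\times3$ matrix $g$ are ${\rm Tr}(g)^2$ and ${\rm Tr}(g^2)$, and I would package them using the Newton/Cayley--Hamilton relations in dimension three, in particular ${\rm Tr}(\mathrm{adj}\,g)=\tfrac12({\rm Tr}(g)^2-{\rm Tr}(g^2))$ and $\det(g)\,g^{-1}=\mathrm{adj}\,g$, to reach the stated form.

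The main obstacle is the bookkeeping of $S_B$: once the two inverse metrics hidden in $\tchrs{i}{pq}$ and $\tchrs{q}{an}$ together with the $g^{mn}$ are expanded, $S_B$ superficially carries up to $\det(g)^2$ in the denominator, and it is only after systematic use of the cofactor identity and careful cancellation against $S_A$ that the expression collapses to a single power of $\det(g)$ with the clean numerator $\tfrac12{\rm Tr}(g^2)-\tfrac14{\rm Tr}(g)^2$. As a guard against sign and factor errors I would first verify the round case $g=\delta$, where $\Gamma_{ijk}=\eps_{ijk}$ gives $S_A=-\tfrac32$ and $S_B=\tfrac34$, hence $S=-\tfrac34$, in agreement with the stated formula.
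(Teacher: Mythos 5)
Your proposal is correct and follows essentially the same route as the paper: the same curvature coefficients $\rho^i{}_{jk}=\tfrac14\tchrs{i}{jk}-\tfrac18\eps_{jpa}\tchrs{i}{pq}\tchrs{q}{ak}$ (the derivative term dropping for constant $\Gamma$), followed by the contraction $S=\rho^i{}_{jn}\eps_{jim}g^{mn}$, and your round-metric check $S_A=-\tfrac32$, $S_B=\tfrac34$ agrees with the paper's value $S=-\tfrac34$. The only difference is cosmetic: you evaluate the final $\eps$-contractions directly via the cofactor identity $\eps_{ipq}g_{pb}g_{qc}=\det(g)\,g^{ia}\eps_{abc}$ and Newton's relations, whereas the paper reduces to the eigenvalue basis $g={\rm diag}(\lambda_1,\lambda_2,\lambda_3)$ to identify the same invariants.
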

\proof We first compute from its definition as given above that
 \begin{align*} 
		R_{\nabla}(s^i) &  =-\tfrac{1}{2}(\tchrs{i}{kl}\extd s^k +(\extd \tchrs{i}{kl})s^k )\otimes s^l + \tfrac{1}{2}\tchrs{i}{kl}s^k\wedge\nabla s^l\\
		  & =\tfrac{1}{4}\tchrs{i}{kl}\epsilon_{kmn}s^m \wedge s^n\tens s^l - \tfrac{1}{2}\extd \tchrs{i}{kl}\wedge s^k  \otimes s^l  -\tfrac{1}{4}\tchrs{i}{kl}\tchrs{l}{mn} s^k\wedge s^m \otimes s^n
	\end{align*}	
which corresponds to
\begin{equation}\label{rho}  \rho^i{}_{jk}=\tfrac{1}{4}\tchrs{i}{jk}-\tfrac{1}{4}\eps_{jmn}\del_m\tchrs{i}{nk}-\tfrac{1}{8}\eps_{jmn}\tchrs{i}{ml}\tchrs{l}{nk}\end{equation}
where $\extd f=(\del_i f)s^i$ defines the partial derivatives. One can check that this reproduces the same $R_\nabla$.
This applies for any left connection on $\Omega^1$.  

We now specialise to the QLC in Proposition~\ref{QLC}. Then
\begin{align*} \rho^i{}_{jk}&=\tfrac{1}{2}(\eps^i{}_{km}g_{mj}-\eps^i{}_{lp}\eps_{jmn}\eps^l{}_{kq}g_{pm}g_{qn})+\tfrac{1}{4}{\rm Tr}(g)( \eps^i{}_{jk}- \eps^j{}_{ik}+\eps^{ij}{}_pg_{pk})-\tfrac{1}{8}{\rm Tr}(g)^2\eps^{ij}{}_k
\end{align*}
which we contract  to obtain Ricci as
\[ R_{st}=\tfrac{1}{2}\eps^i{}_{tm}g_{mj}\eps_{jis}+ \tfrac{1}{2}{\rm Tr}(g^{-1})g_{st}-\tfrac{1}{2}\delta_{st}+\tfrac{1}{2}{\rm Tr}(g)(g^{st}-{\rm Tr}(g^{-1})\delta_{st}-\tfrac{1}{2}\eps^{ij}{}_p\eps_{ijs}g_{pt})+\tfrac{1}{8}{\rm Tr}(g)^2\eps^{ij}{}_t\eps_{ijs}.\]
This then contracts further to 
\[ S={\rm Tr}(g^{-1})+\tfrac{1}{2}\eps^{ij}{}_m (g_{mk}-\tfrac{1}{2}{\rm Tr}(g)\delta_{mk}+\tfrac{1}{4}{\rm Tr}(g)^2 g^{mk})\eps_{ijk}+\tfrac{1}{2}{\rm Tr}(g)({\rm Tr}(g^{-2})-{\rm Tr}(g^{-1})^2). \]
Finally, we identify the middle $\eps...\eps$ expression in terms of $\det g^{-1}$. This is most easily seen assuming that $g-=\rm{diag}(\lambda_1,\lambda_2,\lambda_3)$, say, but then holds generally. Here $\eps^{ij}{}_{k}g^{mk}\eps_{ijk}=\eps_{ijk}^2\lambda_i^{-1}\lambda_j^{-1}\lambda_k^{-1}=6\det g^{-1}$ (summing over $i,j,k$) is well-known, but a similar method gives 
\[ \eps^{ij}{}_{k}\eps_{ijk}=\eps_{ijk}^2\lambda_i^{-1}\lambda_j^{-1}\eps_{ijk}^2\lambda_i^{-1}\lambda_j^{-1}\lambda_k^{-1}\lambda_k=2\det g^{-1}{\rm Tr}(g)\]
\[ \eps^{ij}{}_{m}g_{mk}\eps_{ijk}=\eps_{ijk}^2\lambda_i^{-1}\lambda_j^{-1}\lambda_k=\eps_{ijk}^2\lambda_i^{-1}\lambda_j^{-1}\lambda_k^{-1}\lambda_k^2=2\det g^{-1}{\rm Tr}(g^2).\] In this way, we obtain
\[ S={\rm Tr}(g^{-1})+\left(  {\rm Tr}(g^2)+\tfrac{1}{4}{\rm Tr}(g)^2\right)\det g^{-1} + \tfrac{1}{2}{\rm Tr}(g)\left( {\rm Tr}(g^{-2}) - {\rm Tr}(g^{-1})^2\right)\]
which then simplifies further to the form stated, as one can again check in the diagonal case. In terms of the $\lambda_i$, this is 
\begin{equation}\label{Slambda} S={\lambda_1^2+\lambda_2^2+\lambda_3^2- 2( \lambda_1\lambda_2+\lambda_1\lambda_3+\lambda_2\lambda_3)\over 4\lambda_1\lambda_2\lambda_3},\end{equation}
which one can also regard as a function on the space of metrics modulo conjugation if we think of the $\lambda_i$ as the (not necessarily distinct) eigenvalues of $g$. \endproof

 Note  that the classical limit of $S$ in the noncommutative geometry conventions here is $-{1\over 2}$ of the classical value. Hence a classical unit sphere with its usual round metric would in our conventions have  $S=-1$. In the fuzzy case, for the rotationally invariant `round' metric $g_{ij}=\delta_{ij}$ on $\C_\lambda[S^2]$ and the unique QLC $\Gamma_{ijk}=\eps_{ijk}$ in Proposition~\ref{QLC}, we have 
\[ R_{mn}=-\tfrac{1}{4}\delta_{mn},\quad S=-\tfrac{3}{4}.\]
If we perturb around this metric by setting $g=\id+\eps$ then we can write
\begin{align*} S&= -\tfrac{3}{4} + \tfrac{1}{4}{\rm Tr}\eps - \tfrac{1}{12}({\rm Tr}\eps)^2 +\tfrac{1}{4}(\eps_{12}^2+\eps_{13}^2+\eps_{23}^2)\\
&\quad+ \tfrac{1}{24}\left((\eps_{11}- \eps_{22})^2+(\eps_{11}-\eps_{33})^2+(\eps_{22}-\eps_{33})^2\right)+ O(\eps^3),\end{align*}
showing an unbounded  mode for the average of the diagonal entries  plus a positive definite part for the `fluctuations' off diagonal or between the diagonal entries. 

\section{Euclideanised quantum gravity on the fuzzy sphere}\label{secqg}

The scalar curvature found above is the main, but not the only, ingredient for quantum gravity in a functional integral approach. Here we briefly consider the other elements and formulate the theory, although it will remain too hard to compute explicitly. Moreover, this will not be physical gravity since our spherical form of coordinates where $\sum_i x_i^2=1-\lambda_p^2$ are suitable for a Euclidean signature (the actual `shape' depends on the metric but the choice of algebra plays the role of the manifold in some sense) but this is still of interest in certain contexts. In the Euclidean case, the partition function for the functional integral should take the form  
\[ Z=\int {\mathcal D}g\, e^{-{2\over G} \int S[g]} \]
(and similarly with operators inserted for expectation values), where $G$ is some real positive coupling constant and
\begin{equation}\label{Sg} S[g]=\frac{g_{11}^2+g_{22}^2+g_{33}^2-2\left( g_{11} g_{22}+g_{11}g_{33}+g_{22}g_{33}\right)+4 \left(g_{12}^2+g_{13}^2+g_{23}^2\right)}{4\det g}\end{equation}
 is the scalar curvature in Proposition~\ref{propS}. 
  
 We also need for the action a map $\int: A\to \C$ which classically would be the Lebesgue measure in local spacetime coordinates times $\sqrt{|\det(g)|}$. One could attempt to characterise this map in the quantum case by requiring that it is a positive linear functional (so $\int a^* a\ge 0$ for all $a$ in the algebra) compatible with the Riemannian metric in some way (classically this would be so as to vanish on a total divergence). This is an interesting question but is not needed for our present purposes as $S[g]\in \R$ is a multiple of the constant function, so the only thing we need is $\int 1$. The natural choice here similar to integration on a Riemannian manifold would be power of $|\det g|$ (but not necessarily its square root as classically, given that our tangent space and metric have the wrong dimension compared to classical sphere). Bearing in mind the $\det(g)$ in the denominator of $S[g]$, the most natural choice here is to cancel this by setting
 \[ \int 1=|\det(g)|.\]
 
 Finally, we need a measure ${\mathcal D}g$ for integration over the space of metrics. In our case,  this is the 6-dimensional space $\CP_3$ of $3\times 3$ positive-definite symmetric matrices, which has the natural structure of a Riemannian manifold as the noncompact symmetric space $GL_3(\R)/O_3(\R)$, with an invariant metric $\cg_{\CP_3}$ given in line element terms as 
 \[ \extd s^2={\rm Tr}( (g^{-1}\extd g)^2).\]
Integration over $\CP_3$ therefore has a canonical Riemannian manifold measure ${\mathcal D}g$ defined relative to Lebesgue measure in local  coordinates by $\sqrt{|\det( \cg_{\CP_3})|}$. The latter works out at metric $g$ as a factor $|\det(g)|^{-2}$, see \cite[Sec 4.1.3]{Ter}. 

Putting these ingredients together, we thus define Euclideanised quantum gravity on the fuzzy sphere by partition function 
 \begin{equation}\label{ZP3} Z= \int_{\CP_3} \prod_{i\le j}\extd g_{ij}\  |\det(g)|^{-2}\,   e^{-{1\over  G}({\rm Tr}(g^2)-\frac{1}{2}{\rm Tr}(g)^2)},\end{equation} 
 where $G$ is a real coupling constant. Here expectation values are the ratio of the same expression with operators inserted divided by $Z$, both parts of which can be expected to diverge given the above initial remarks and the noncompact nature of $\CP_3$. One can also write 
 \[ \det(g)^{-2}=(2\pi)^{-{3\over 2}}\int_{\R^3}\extd^3 \vec x\ e^{-{1\over 2}\vec x^t g^2 \vec x}\] so that up to a discarded constant and assuming we can swap the order of integration,
 \begin{equation}\label{Zx} Z=\int_{\R^3}\prod_{i}\extd x_i \int_{\CP_3} \prod_{i\le j}\extd g_{ij}\   e^{-{1\over  G}({\rm Tr}(g^2)-\frac{1}{2}{\rm Tr}(g)^2)-{1\over 2}\vec x^t g^2 \vec x}, \end{equation}
giving an idea of the formal content of the theory. Moreover, {\em if} we ignored the restriction to $\CP_3$, we could then do the $\extd g$ integration as a Gaussian to give the inverse determinant of a quadratic form built from the $x_i$. We can also write $(\det g)^{-2}=e^{-2{\rm Tr}\ln g}$ in (\ref{ZP3}) as a non-quadratic `interaction' term. We now look more closely at the theory in a reduced form where we look only at $SO_3$-invariant expressions. 
  
Our first step it to parameterize positive symmetric matrices according to the spectral decomposition  $g=C^t{\rm diag}(\lambda_1,\lambda_2,\lambda_3)C$ for some $\vec\lambda\in \R_{>0}^3$ and some $C\in SO_3$. The latter is not unique but the multiplicity is discrete and generically we can convert $\extd g$ to these new coordinates with an appropriate Jacobean. To do this explicitly, we let $E(\theta,\phi,\psi)$ be the Euler rotation matrix for angles $\theta,\phi,\psi$ and let
\[ g=E(\theta,\phi,\psi)^t {\rm diag}(\lambda_1,\lambda_2,\lambda_3)E(\theta,\phi,\psi).\]
This change of variables is locally invertible for $\sin \phi\ne 0$ and distinct $\lambda_i$, indeed the Jacobean can be computed and we find
\[  \prod_{i\le j}\extd g_{ij} =\extd \theta\, \extd\phi\, \extd\psi\, |\sin(\phi)| \prod_i\extd\lambda_i\  | (\lambda_1-\lambda_2)(\lambda_1-\lambda_3)(\lambda_2-\lambda_3)|\]
On the other hand, the action itself does not depend on the $SO_3$ conjugation (having the value in (\ref{Slambda}) but without the $\lambda_1\lambda_2\lambda_3$ denominator there), so for the partition function and for any insertions that depend only on the $\lambda_i$ and not on the angles, we can do the integration over a dense subset of $SO_3$ to give a constant, which we ignore. Hence (\ref{ZP3}) becomes effectively
\begin{equation}\label{Zlam} Z=\int_\eps^L\kern-5pt\prod_i\extd\lambda_i\,  \frac{| (\lambda_1-\lambda_2)(\lambda_1-\lambda_3)(\lambda_2-\lambda_3)|}{\lambda_1^2\lambda_2^2\lambda_3^2}e^{-\frac{1}{2G}(\lambda_1^2+\lambda_2^2+\lambda_3^2- 2( \lambda_1\lambda_2+\lambda_1\lambda_3+\lambda_2\lambda_3))},\end{equation}
where we have introduced cut-offs $L>>\eps>0$ to regulate divergences at both ends. The story here turns out to be very similar to \cite{Ma:squ} in that the divergence as $\eps\to 0$ does not show up when we look as vacuum expectation values as these are ratios (both top and bottom diverge as $\eps\to 0$ but the ratio is well defined in this limit). Moreover,
\[  \<\lambda_{i_1}\cdots\lambda_{i_n}\>\sim\frac{3}{16}L^n\]
for large $L$,  independently of which $\lambda_i$ are involved (this was checked numerically to several orders of $\lambda$), as plotted for $n=1,2$ in Figure~\ref{fuzqg}.  It follows that we have well-defined ratios
\[ {\<\lambda_{i_1}\cdots\lambda_{i_n}\>\over\<\lambda_i\>^n}=(\tfrac{16}{3})^{n-1}\]
in the limit $L\to\infty$, and in particular that there is a uniform relative uncertainty in the $\lambda_i$,
 \[ {\Delta\lambda_i\over \<\lambda_i\>}:={\sqrt{\<\lambda_i^2\>-\<\lambda_i\>^2}\over  \<\lambda_i\>}=\sqrt{\tfrac{13}{3}}\]
 similarly to quantum gravity on a square in \cite{Ma:squ}. Note, however, that these are formal interpretations given that this is a Euclidean theory. 
\begin{figure}
\[ \includegraphics[scale=0.47]{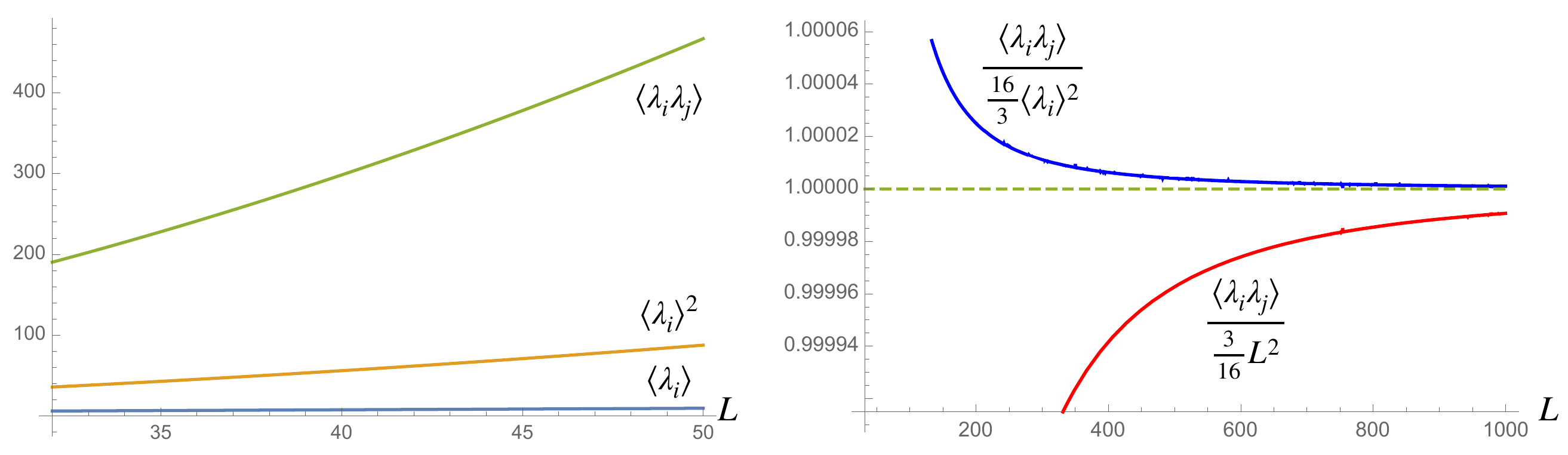}\]
\caption{Expectation values for Euclidean quantum gravity on the fuzzy sphere as a function of the cutoff $L$. The right graph shows $\<\lambda_i\lambda_j\>$ converging to ${16\over 3}\<\lambda_i\>^2$ and ${3\over 16}L^2$. \label{fuzqg}}
\end{figure}

We can also follow the pattern of \cite{Ma:squ} and look at a partial theory where we regard the average of the $\lambda_i$ as a background metric with respect to which we are quantising only the differences. Thus, we let 
\[ \lambda_1=u-2v,\quad \lambda_2=u+v-w,\quad \lambda_3=u+v+w\]
with inverse
\[ u=\tfrac{1}{3}(\lambda_1+\lambda_2+\lambda_3),\quad v=\tfrac{1}{6}(\lambda_2+\lambda_3-2\lambda_1),\quad w=\tfrac{1}{2}(\lambda_3-\lambda_2),\]
which diagonizes the quadratic form in the action to 
\[ \lambda_1^2+\lambda_2^2+\lambda_3^2- 2( \lambda_1\lambda_2+\lambda_1\lambda_3+\lambda_2\lambda_3)=-3u^2+12 v^2+4w^2.\]
In the partial theory, we leave out the $\extd u$ integral and regard $u>0$ as a parameter. Putting in the region of $u,v,w$ corresponding to $\lambda_i>0$, we have the effective theory for the `fluctuation' variables $v,w$,
\begin{align*} Z_u&=2\int_{-u}^{u\over 2}\extd v\int_{-(u+v)}^{u+v} \extd w\, {|(9v^2-w^2)w|\over (u-2v)^2((u+v)^2-w^2)^2}e^{-{2\over G}(3 v^2+w^2)}\\
&=4\int_{-u}^{u\over 2}\extd v\int_{0}^{u+v} \extd w\, {|9v^2-w^2|w\over (u-2v)^2((u+v)^2-w^2)^2}e^{-{2\over G}(3 v^2+w^2)}.
\end{align*}
This is still divergent at the boundaries corresponding previously to $\lambda_i=0$, but has the merit that the inner integral can now be done analytically. The previous partition function 
\[ Z=\int_0^\infty \extd u\  e^{{3\over 2G}u^2} Z_u\]
still contains the  other divergence at $\lambda_i=\infty$, requiring a cut-off.  One can make this change for the diagonal entries of any metric and have the off-diagonals as three further Gaussian variables according to (\ref{Sg}), but the restriction on the variables for a positive metric is then much harder
to describe.

\section{Fuzzy monpole}  \label{secmon}

We have focussed on the quantum Riemannian geometry of the fuzzy sphere with its new calculus from \cite{BegMa}. However, we could also
ask about the monopole connection for this calculus. Classically, this arises naturally on the tautological line bundle over the sphere which in
algebraic terms is the Grassmann connection for the rank 1 projector associated with that. It was already shown in \cite{Ma:fuz} that
the fuzzy sphere (as well as the $q$-fuzzy sphere) has such a projector giving a natural rank 1 line bundle $\CS$ and it is already explained in \cite[Example~3.27]{BegMa} how one may then compute the Grassmann connection, depending on the choice of calculus. Here we do this explicitly for our choice of $\Omega(\C_\lambda[S^2])$. 

First, it can be convenient to use new `coordinates' as in \cite{Ma:fuz,BegMa} with relations
\[ x=\tfrac{1}{2} (x_3+1+\lambda_p),\quad z=\tfrac{1}{2}(x_1+\imath x_2);\quad [x,z]=\lambda_p z,\quad [z,z^*]=2\lambda x-\lambda_p(1+\lambda_p),\quad z^*z=x(1-x)\]
and let \cite{Ma:fuz}
\[ P=\begin{pmatrix} 1+\lambda_p-x & z\\ z^* & x\end{pmatrix}=\tfrac{1}{2}\begin{pmatrix} 1+\lambda_p-x_3 & x_1+\imath x_2\\ x_1-\imath x_2 & 1+\lambda_p+x_3\end{pmatrix}\]
which one can check obeys $P^2=P$. We then define a projective basis of $\CS$ with relation,
\[ e^1=(1,0).P=(1+\lambda_p-x, z),\quad e^2=(0,1).P=(z^*,x);\quad (x-\lambda_p)e^1=z e^2\]
as one can check (so these are not independent, the bundle is really 1-dimensional). Here, we will stick with 
the $x_i$ generators in order to match the rest of the paper. 

\begin{lemma} The Grassman connection $\nabla_\CS e^\alpha=(\extd P_{\alpha\beta})P_{\beta\gamma}\tens e^\gamma$ on $\CS$ for our calculus on $\C_\lambda[S^2]$ is 
\begin{align*}
    \nabla_\CS e^\alpha & = \left({1+ \lambda_p \over 2} \extd P_{\alpha \beta} +\lambda_p  P_{\alpha \beta}\theta +\imath\frac{1-\lambda_p^2}{4} Q_{\alpha \beta} -\frac{\lambda_p(1-\lambda_p)}{2}\delta_{\alpha\beta}\theta\right)\otimes e^\beta,
\end{align*}
where  $Q =\begin{pmatrix}
   - s^3  & s^1+\imath s^2 \\
s^1-\imath s^2 &  s^3
\end{pmatrix}$. 
\end{lemma}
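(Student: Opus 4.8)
The plan is to compute the Grassmann connection $\nabla_\CS e^\alpha=(\extd P_{\alpha\beta})P_{\beta\gamma}\tens e^\gamma$ directly, starting from the explicit projector $P$ and the exterior derivative $\extd x_i=\epsilon_{ijk}x_j s^k$ of the calculus. The essential strategy is: first differentiate the matrix entries of $P$, then multiply by $P$ as a matrix, and finally reorganize the resulting $2\times 2$ matrix of one-forms into the four named pieces $\extd P$, $P\theta$, $Q$, and $\delta_{\alpha\beta}\theta$. Since everything is a finite matrix computation over the noncommutative algebra, the whole proof is a controlled but lengthy algebraic simplification; no conceptual obstruction is expected, only bookkeeping.

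First I would write $P=\tfrac12(I(1+\lambda_p)+x_i\sigma_i)$ in terms of Pauli-type matrices, so that $\extd P=\tfrac12(\extd x_i)\sigma_i=\tfrac12\epsilon_{ijk}x_j s^k\sigma_i$. It is convenient to recognize that the matrix $Q$ in the statement is exactly $Q=s^i\sigma_i$ with the given sign convention, i.e.\ $Q=\begin{pmatrix}-s^3 & s^1+\imath s^2\\ s^1-\imath s^2 & s^3\end{pmatrix}$, which pairs the Grassmann basis one-forms $s^i$ with the same generators $\sigma_i$ that build $P$. I would therefore aim to express the product $(\extd P)P$ in the basis $\{I,\sigma_i\}$: writing $(\extd P)P=\tfrac12 A\cdot I+\tfrac12 B_i\sigma_i$ for one-form coefficients $A,B_i$, the matrix structure of the claimed answer then tells me that $A$ should collect the $\theta$ and $\delta_{\alpha\beta}\theta$ terms while the $B_i\sigma_i$ part should assemble $\extd P$, $P\theta$, and the $Q$ term. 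The identity $\theta=\tfrac{1}{2\imath\lambda_p}x_i s^i$ from the calculus will be the key substitution letting me trade expressions like $x_i s^i$ for $\theta$.

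The computational heart is multiplying $(\extd x_i)$ by the entries of $P$, which forces me to commute the generators $x_j$ (appearing inside $\extd x_i$) past the $x_k$ (appearing in $P$) using $[x_i,x_j]=2\imath\lambda_p\epsilon_{ijk}x_k$, and to use the sphere relation $\sum_i x_i^2=1-\lambda_p^2$. The products $\epsilon_{ijk}x_j x_k$ collapse via antisymmetry into commutator terms proportional to $\lambda_p x_i$, and $x_j x_j$ terms become the constant $1-\lambda_p^2$; these are precisely the sources of the explicit $\lambda_p$, $\lambda_p^2$, and $1-\lambda_p^2$ coefficients appearing in the four terms of the answer. I would carry the Pauli-matrix algebra $\sigma_i\sigma_j=\delta_{ij}I+\imath\epsilon_{ijk}\sigma_k$ in parallel, since contracting $\epsilon_{ijk}$ against this identity is what converts products of the structure constants into the trace-like $\delta_{\alpha\beta}$ contribution and the curvature-like $Q$ contribution.

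\textbf{The main obstacle} I anticipate is purely organizational: keeping the ordering of the noncommuting factors correct while the $s^k$ (which are central, $[s^k,x_j]=0$) can be freely slid but the $x_j$ cannot, and then matching the accumulated scalar coefficients to the exact fractions $\tfrac{1+\lambda_p}{2}$, $\lambda_p$, $\imath\tfrac{1-\lambda_p^2}{4}$, and $-\tfrac{\lambda_p(1-\lambda_p)}{2}$ in the stated formula. A useful internal consistency check is that $\nabla_\CS$ must respect the bundle relation $(x-\lambda_p)e^1=z e^2$, and that the sum of the coefficients should reproduce a genuine connection (the Grassmann connection is automatically well defined since $P^2=P$), so I would verify the final matrix by acting on the projective basis and confirming compatibility with the defining relations of $\CS$ rather than trusting the raw term-by-term bookkeeping alone.
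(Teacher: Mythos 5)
Your plan is correct and the route it describes does go through to the stated formula; I checked that the coefficients assemble exactly as claimed. The paper's own proof is also a direct evaluation of $(\extd P)P$, but it proceeds entry by entry: it writes out the $2\times 2$ matrix $\extd P$ explicitly, computes the $(1,1)$ entry of $\extd P\cdot P$ in full using the algebra relations, the sphere relation and the two forms of $\theta$, and omits the other three entries as ``similar''. Your decomposition $P=\tfrac12\left((1+\lambda_p)I+x_i\tilde\sigma_i\right)$, $Q=s^i\tilde\sigma_i$ (where $\tilde\sigma_1=\sigma_1$, $\tilde\sigma_2=-\sigma_2$, $\tilde\sigma_3=-\sigma_3$, which still satisfy $\tilde\sigma_i\tilde\sigma_l=\delta_{il}I+\imath\epsilon_{ilm}\tilde\sigma_m$) handles all four entries simultaneously: one gets
\[ (\extd P)P=\tfrac{1+\lambda_p}{2}\,\extd P+\tfrac14(\extd x_i)x_i\, I+\tfrac{\imath}{4}\epsilon_{ilm}(\extd x_i)x_l\,\tilde\sigma_m,\]
and the three inputs you identify, namely $(\extd x_i)x_i=4\lambda_p^2\theta$, the rearrangement of Lemma~2.1 to $\epsilon_{mil}(\extd x_i)x_l=(1-\lambda_p^2)s^m-2\imath\lambda_p x_m\theta$, and $x_m\tilde\sigma_m=2P-(1+\lambda_p)I$, produce precisely the terms $\lambda_p P\theta$, $\imath\tfrac{1-\lambda_p^2}{4}Q$ and $-\tfrac{\lambda_p(1-\lambda_p)}{2}I\theta$. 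What your route buys is that the $Q$ and $\delta_{\alpha\beta}\theta$ contributions arise structurally from the $\epsilon$- and $\delta$-parts of $\tilde\sigma_i\tilde\sigma_l$ rather than being recognized a posteriori in each matrix entry; the only extra care needed is the sign convention in the ``Pauli'' basis (which you flag) and keeping the ordering $x_m\theta$, with the algebra element on the left, when substituting for $x_mx_i\extd x_i$ --- that ordering is what yields $P_{\alpha\beta}\theta$ rather than $\theta P_{\alpha\beta}$ in the final formula.
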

\proof Here $\nabla_\CS:\CS\to \Omega^1\tens_A\CS$ obeys similar axioms to those for a left connection on $\Omega^1$ in Section~\ref{secpre}. The calculation of the stated formula from the projector is a straightforward from
\[ \extd P = \tfrac{1}{2}\begin{pmatrix}
x_2 s^1 - x_1 s^2 & x_2 s^3 - x_3 s^2 + \imath x_3 s^1 -\imath x_1 s^3  \\
x_2 s^3 - x_3 s^2 -\imath x_3 s^1 +\imath x_1 s^3 & x_1 s^2 - x_2 s^1 
\end{pmatrix}\]
and the commutation relations in the algebra. For example,
\begin{align*} 
    (\extd P.P)_{11} & = \tfrac{1}{4}\left( (x_2 s^1 - x_1 s^2)(1+\lambda_p - x_3) + (x_2 s^3 - x_3 s^2 -\imath x_3 s^1 +\imath x_1 s^3)(x_1 - \imath x_2 ) \right)\\
    & = \tfrac{1}{4}\left( (1+\lambda _p )(x_2 s^1 - x_1 s^2) + \imath x_3 (x_1 s^1 + x_2 s^2 + x_3 s^3) - \imath (1-\lambda_p^2 ) s^3 + \right. \\
    & \phantom{= \tfrac{1}{4} ( } \left. + [x_3, x_2]s^1 + [x_1 , x_3 ]s^2 + [x_2 , x_1 ]s^3 \right) \\
    & = \tfrac{1}{4}\left( (1+\lambda _p )(x_2 s^1 - x_1 s^2 )-2x_3 \lambda_p \theta +\imath (\lambda_p^2 -1)s^3 - 2\imath \lambda_p (x_1 s^1 + x_2 s^2 + x_3 s^3 ) \right)\\
    & = \tfrac{1}{4}\left( (1+ \lambda _p )\extd (- x_3) +\imath (\lambda_p^2 -1)s^3 - 2\lambda_px_3 \theta + 4\lambda_p^2\theta\right) \\
    & = \tfrac{1}{2} (1+ \lambda _p )\extd P_{11} + \lambda_p P_{11} \theta -\tfrac{1}{2} (1-\lambda_p)\lambda_p\theta -\tfrac{\imath}{4} (1-\lambda_p^2)s^3.
\end{align*}
Similary for the other entries of $\extd P.P$, the proof of which we omit. 
\endproof

%For the curvature, we use the above for $\extd P.P$ to compute
%\begin{align*}
 %   \extd P&\wedge(\extd P)P  ={1+\lambda_p \over 2}\extd P\wedge\extd P+\lambda_p \extd P\wedge P\theta+\imath\frac{1-\lambda_p^2}{4}\extd P\wedge Q -\frac{\lambda_p(1-\lambda_p)}{2}  \extd P.\theta\\
 %   &={1+\lambda_p \over 2}\extd P\wedge\extd P+\lambda_p^2(\extd P.\theta+P\theta^2)+\imath\frac{\lambda_p(1-\lambda_p^2)}{4}Q\theta -\frac{\lambda_p^2(1-\lambda_p)}{2}\theta^2+\imath\frac{1-\lambda_p^2}{4}\extd P\wedge Q, 
%\end{align*}
%where the second term can also be rewritten using $\extd P\theta+P\theta^2=\theta P
%\theta=\theta^2P-\theta\extd P$. 

The curvature similarly acts as a  2-form valued matrix on our basis vectors $e^\alpha$, this time given by 
\[\extd P\wedge(\extd P)P= \frac{ \imath  (1-\lambda_p )}{4}\left(f_{12}s^1\wedge s^2+ f_{31}s^3\wedge s^1+ f_{23}s^2\wedge s^3\right)\]
for some $A$-valued matrix coefficients. For example, one can compute
\[ f_{12}= \begin{pmatrix}
 (x_3-\lambda_p)(1+\lambda_p-x_3) &  (x_3-\lambda_p)(x_1+\imath x_2)   \\
  (x_3+\lambda_p) (x_1-\imath  x_2) &(x_3+\lambda_p)(1+\lambda_p+x_3)  
\end{pmatrix}=2\begin{pmatrix}
 x_3-\lambda_p & 0  \\
 0&x_3+\lambda_p
\end{pmatrix}P  \]
\[ f_{31}=
 \begin{pmatrix} x_2(1+\lambda_p-x_3) +\imath\lambda_p(x_1-\imath x_2)
 &   x_2 (x_1+\imath x_2)+\imath \lambda_p  (1+\lambda_p +x_3) \\
-\imath \lambda_p  (1+\lambda_p -x_3)+ x_2( x_1-\imath x_2) & -\imath\lambda_p(x_1+\imath x_2)+x_2(1+\lambda_p+x_3)\end{pmatrix}=2\begin{pmatrix}
 x_2&\imath\lambda_p \\ -\imath\lambda_p  &x_2
\end{pmatrix}P.\]  

% \[ f_{23}= \begin{pmatrix}x_1(2\lambda_p-x_3)+x_1+\imath x_2& x_1(x_1+\imath x_2)+\lambda_p  (1+\lambda_p +x_3) \\
%x_1(x_1-\imath x_2)+\lambda_p  (1+\lambda_p-x_3) &   x_1-\imath x_2+x_1(2\lambda_p+x_3)
%\end{pmatrix} \]

\section{Concluding remarks}\label{secrem}

Fuzzy-$\R^3$ in the form of the angular momentum algebra $U(su_2)$ has a long history as a `quantisation' as well as clear applications such as at the heart of 3D Euclideanised quantum gravity without cosmological constant. Its quotient the fuzzy sphere is likewise well known as the quantisation of a coadjoint orbit. Although less relevant perhaps to 3D quantum gravity, it is nevertheless related to Penrose' spin network geometry\cite{Pen}, and could also have a role for the geometry of angular momentum in actual quantum systems. A differential calculus that works with it was, however, only recently proposed\cite[Example~1.46]{BegMa} and it is significant that, using this, we have now solved its quantum Riemannian geometry (found a natural quantum Levi-Civita connection) for any quantum metric, including the canonical  rotationally invariant `round' metric given by $\delta_{ij}$. 

After understanding this moduli of quantum Riemannian geometries on the fuzzy sphere, we went on and constructed Euclidean quantum gravity on it, where we integrate over all quantum geometries. Even though the fuzzy sphere is an infinite-dimensional algebra which becomes functions (in some form) on the usual sphere in the classical limit, its quantum geometry turned out to be much more rigid and to admit only quantum metrics of the form of a single $3\times 3$ matrix $g_{ij}$ transported over the whole algebra. It turned out that the quantum Levi-Civita connection, hence the whole moduli of quantum Riemannian geometry could similarly be developed with constant coefficients, hence the quantum geometry behaves effectively like just one point. This was not put in by hand, but forced by the rigidity of the axioms of noncommutative geometry and the noncommutativity of our particular algebra. Moreover, our results were strikingly similar to quantum gravity on a quadrilateral in \cite{Ma:squ} even though the details are completely different not to mention that that model is Lorentzian with $\imath$ in action, whereas ours is Euclidean. In both cases, the functional integral over all metrics of the natural action built from the quantum Ricci scalar has  UV and IR divergences, appearing in our case at $\lambda_i=0,\infty$. In both cases, the divergence at metric zero modes cancels in the ratio of functional integrals when we look at expectation values. In both cases, the other divergence is controlled by a cutt-off L and in both cases the expectation value of an $n$-th power of the field diverges as $L^n$, with the result that ratios of expectation values can still be defined as $L\to\infty$. 
In both cases, we found in this way a uniform relative uncertainly in the quantisation of the metric components (in our case, we quanitised the metric eigenvalues but one can also think of this as quantising diagonal metrics). 

There are many interesting directions that one could further explore. On the quantum gravity front, one could introduce matter and see how some kind of Einstein equation emerges out of quantum gravity with matter. A first step here would be to better understand the geometric approach to the stress-energy tensor. One could also consider quantum matter fields on curved FLRW cosmologies $\R\times S^2$ where the $S^2$ is now fuzzy, following the spirit of \cite{ArgMa} where $\R\times \Z_n$ is done using quantum geometry on the polygon $\Z_n$ (this work also solves Euclidean quantum gravity on $\Z_n$).  Finally, one could look at quantum geodesics on the fuzzy sphere using the recent formalism in \cite{BegMa5}. 

We have also constructed a natural fuzzy monopole and there are potentially many applications that could be related to that. In physical terms, this could be relevant to any quantum system where classically one has angular momentum at play, for example the effective geometry around a quantised hydrogen atom. In mathematical terms, an important application would be towards the programme of `geometric realisation' - constructing examples of Connes' notion of `spectral triple'\cite{Con} or `axiomatic Dirac operator' but in a geometric manner starting with the quantum differential structure, a spinor bundle with connection, a quantum metric and a Clifford structure. This was done for the $q$-sphere in \cite{BegMa3} and one might try to follow the same steps. Thus, the spinor bundle on the sphere should be of the form $\CS=\CS_+\oplus\CS_-$ where $\CS$ is the charge 1 monopole line bundle as found in Section~\ref{secmon} and $\CS_-$ is its dual. The Clifford structure is a `Clifford action' map $\Omega^1\tens_A\CS\to \CS$ obeying certain axioms \cite{BegMa3,BegMa} of compatibility with the $*$-structure, with the connection on $\CS$ and with the quantum Levi-Civita connection. On the other hand, the Clifford structure for $q$-sphere case was found from the holomorphic structure of its 2D calculus, which does not apply here. This nevertheless merits further study and will be attempted elsewhere. If a geometrically-realised spectral triple can be constructed on the fuzzy sphere, it may (or may not) descend when $\lambda_p=1/n$ to the quotient $c_\lambda[S^2]$ isomorphic to $M_n$, i.e. to the reduced matrix fuzzy spheres. This may then complement (or perhaps relate to) the finite fuzzy Dirac operators constructed in \cite{Bar} using Connes formalism and  starting from  the reduced noncommutative torus.


\begin{thebibliography}{ggghhh}
%\setlength{\bibsep}{0 in}


\bibitem{AmeMa}G. Amelino-Camelia and S. Majid, Waves on noncommutative spacetime and gamma-ray bursts, Int. J. Mod. Phys. A 15 (2000) 4301--4323

\bibitem{ArgMa} J. Argota Quiroz and S. Majid, Quantum gravity on polygons and $\R\times \Z_n$ FLRW model, in preparation. 

\bibitem{Bar}J. W. Barrett, Matrix geometries and fuzzy spaces as finite spectral triples, J. Math. Phys. 56  (2015) 082301

\bibitem{BatMa} E. Batista and S. Majid, Noncommutative geometry of angular momentum space $U(su_2)$, J. Math. Phys. 44 (2003) 107--137



\bibitem{BegMa1}
E.J.\ Beggs and  S.\ Majid, *-compatible connections in noncommutative Riemannian geometry, J.\ Geom.\ Phys.\ 61 (2011)  95--124

\bibitem{BegMa2}
 E.J. Beggs and S. Majid, Gravity induced by quantum spacetime, Class. Quantum. Grav. 31 (2014) 035020 (39pp)
 
 \bibitem{BegMa3}E.J. Beggs and S. Majid, Spectral triples from bimodule connections and Chern connections, J. Noncomm. Geom., 11 (2017) 669--701

 \bibitem{BegMa5} E.J. Beggs and S. Majid, Quantum geodesics in quantum mechanics, arXiv:1912.13376 (math-ph) 
 
  \bibitem{BegMa} E.J. Beggs and S. Majid, {\em Quantum Riemannian Geometry},  Grundlehren der mathematischen Wissenschaften, Vol. 355, Springer (2020) 809pp

\bibitem{Con}
A. Connes,   {\em Noncommutative Geometry}, 
Academic Press, Inc., San Diego, CA, 1994



%\bibitem{DVMass}M.\ Dubois-Violette and  T.\ Masson, On the first-order operators in bimodules, Lett.\ Math.\ Phys.\ 37 (1996) 467--474

\bibitem{DVM}
M. Dubois-Violette and  P.W.\ Michor, Connections on central bimodules in 
noncommutative differential geometry, J.\ Geom.\ Phys.\ 20 (1996) 218--232


\bibitem{FreLiv}L. Freidel and E. R. Livine, Ponzano-Regge model revisited. III: Feynman diagrams and effective field theory, Class. Quantum Grav. 23 (2006) 2021

\bibitem{FreMa} L. Freidel and S. Majid, Noncommutative harmonic analysis, sampling theory and the Duflo map in 2+1 quantum gravity, Class. Quant. Gravity 25 (2008) 045006 (37pp)


\bibitem{HawGib}S. Hawking and G. Gibbon, eds. Euclidean Quantum Gravity, World Scientific (1993)

\bibitem{Hoo}G. 't Hooft, Quantization of point particles in 2+1 dimensional gravity and space- time discreteness, Class. Quant. Grav. 13 (1996) 1023




\bibitem{Mad}
J.\ Madore, {\em An introduction to noncommutative differential geometry
 and its physical applications}, LMS Lecture Note Series, 257, C.U.P.  1999. 
  
\bibitem{Ma:pla}S. Majid, Hopf algebras for physics at the Planck scale, Class. Quantum Grav. 5 (1988) 1587--1607
%\bibitem{Ma:mea} S. Majid, Meaning of noncommutative geometry and the Planck-scale quantum group, Springer Lect. Notes Phys. 541 (2000) 227--276
\bibitem{Ma:spo} S. Majid, Noncommutative model with spontaneous time generation and Planckian bound, J. Math. Phys. 46 (2005) 103520 (18pp)
\bibitem{Ma:gra} S. Majid, Noncommutative Riemannian geometry of graphs, J. Geom. Phys. 69 (2013) 74--93
%\bibitem{Ma:recon}S. Majid, Reconstruction and quantisation of Riemannian structures, J. Math. Phys.  61, 022501 (2020) (32pp) 
%\bibitem{Ma:newt}S. Majid, Newtonian gravity on quantum spacetime,  Euro Phys. J. Web of Conferences, 70 (2014) 00082 (10pp)
\bibitem{Ma:fuz}S. Majid, q-Fuzzy spheres and quantum differentials on $B_q[SU_2]$ and $U_q(su_2)$, Lett. Math. Phys. 98 (2011) 167--191
\bibitem{Ma:squ}S. Majid, Quantum gravity on a square graph, Class. Quantum Grav  36 (2019) 245009 (23pp)
\bibitem{Ma:haw}S. Majid, Quantum Riemannian geometry and particle creation on the integer line, Class. Quantum Grav. 36 (2019) 135011 (22pp) 

\bibitem{MaOse}S. Majid and P.K. Osei, Quasitriangular structure and twisting of the 2+1 bicrossproduct model, J. High Energ. Phys. 1 (2018) 147 (28pp) 

\bibitem{MaPac2} S. Majid and A. Pachol, Digital finite quantum Riemannian geometries, J. Phys. A 53 (2020) 115202 (40pp) 

\bibitem{MaRue}S. Majid and H. Ruegg, Bicrossproduct structure of the $\kappa$-Poincar\'e group and non-commutative geometry, Phys. Lett. B. 334 (1994) 348--354

\bibitem{MaSch} S. Majid and B. Schroers, q-Deformation and semidualisation in 3D quantum gravity, J. Phys A 42 (2009) 425402 (40pp)

\bibitem{Mou}
J.\ Mourad, Linear connections in noncommutative geometry, Class.\ Quant. 
Grav.\ 12 (1995)  965--974

\bibitem{Pen}R. Penrose, Angular momentum: an approach to combinatorial spacetime, in {\em Quantum Theory and Beyond}, T. Basin, ed.,  Cambridge University Press, Cambridge (1971)


\bibitem{Sny}H. S. Snyder, Quantized space-time, Phys. Rev. D 67 (1947) 38--41

\bibitem{Msc}I. Staden, Noncommutative differential structures, MSc thesis, QMUL (2019)

\bibitem{Ter}A. Terras,  {\em Harmonic Analysis on Symmetric Spaces and Applications, Vol II}, Springer (2012)
\end{thebibliography}
\end{document}